\titleformat{\subsection}{\it}{\thesubsection.\enspace}{1.5pt}{}
\titleformat{\subsubsection}{\it}{\thesubsubsection.\enspace}{1.5pt}{}
\newtheorem{theorem}{Theorem}[section]
\newtheorem{lemma}[theorem]{Lemma}
\newtheorem{remark}[theorem]{Remark}
\numberwithin{equation}{section}
\newcommand\sn{{\bf{S}}({\bf{v}}^{n-1}_{1})}
\newcommand\an{{\bf{A}}({\bf{v}}^{n-1}_{1})}
\newcommand\bn{{\bf{B}}({\bf{v}}^{n-1}_{1})}
\newcommand\vn{{\bf{v}}^{n}}
\newcommand\vnn{{\bf{v}}^{n-1}}
\newcommand\fn{{\bf{F}}({\bf{v}}_{1}^{n-1}, \partial_{y}{\bf{v}}_{1}^{n-1})}
\DeclareMathOperator{\divv}{div}
\begin{document}
	\title{Local-in-time well-posedness for 2D   compressible  magneto-micropolar boundary layer in Sobolev spaces \hspace{-4mm}}
	\author{Yuming Qin$^{\ast}$$^{a,b}$ \quad Junchen Liu$^{b}$ \\[10pt]
		\small {$^a$ Institute for Nonlinear Sciences, Donghua University, 201620, Shanghai, P. R. China}\\
		\small {$^b$ School of Mathematics and Statistics,  Donghua University,
	201620, Shanghai, P. R. China}
		\\[5pt]
	}

	\footnotetext{Email: \it $^{\ast}$   yuming@dhu.edu.cn, yuming\_qin@hotmail.com}
	\date{}
	
	\maketitle

	\begin{abstract}
		{

			In this paper, we  study the two-dimensional  compressible magneto-micropolar boundary layer equations on the half-plane, which are  derived from 2D compressible magneto-micropolar fluid equations with the non-slip boundary condition on velocity,
			Dirichlet boundary condition on micro-rotational velocity and perfectly conducting boundary condition on magnetic field. 
				Based on a nonlinear coordinate transformation  proposed in \cite{LXY2019}, we first prove  the local-in-time well-posedness for the compressible magneto-micropolar boundary layer system in
			 Sobolev spaces, provided that initial tangential magnetic field is non-degenerate.
		}
		
		\vspace*{5pt}
		\noindent{\it {\rm Keywords}}: Compressible  magneto-micropolar;  Boundary layer;  Local well-posedness; Sobolev spaces.
		
	\vspace*{5pt}
		\noindent{\it {\rm  Mathematics Subject Classification:}}\ {\rm 35Q35; 35B30; 35M33; 76W05; 76N20}
	\end{abstract}

%
	
\section{Introduction}
The micropolar fluid theory, proposed by Turkish scholar  Eringen, A.-C. \cite{E1967} in the 1960s, extends classical fluid mechanics to describe fluids with microstructures or intrinsic rotational characteristics. While classical fluid mechanics, governed by the Navier-Stokes equations, assumes a continuum and neglects the microscopic rotations, particle interactions, and angular momentum inherent in certain fluids, this limitation \cite{HF1964,VP1964} makes it inadequate for capturing the behavior of complex fluids with microstructural or directional properties, such as liquid crystals, suspensions, and biological fluids. The micropolar fluid theory, especially the compressible case (see \cite{HV2009,PVH2012,R1997}),  addresses these deficiencies, providing a more comprehensive framework for studying such materials.

The behavior of a three-dimensional compressible micropolar fluid influenced by a magnetic field is stated as the following magneto-micropolar fluid equations \cite{WGL2017,JZJ2023}, which characterize the motion of the fluid as it interacts with the magnetic field,
\begin{equation}
	\left\{\begin{aligned}
	 &\partial_t\rho+\divv(\rho \mathbf{u})=0,\\
	&\partial_t(\rho \mathbf{u})+\divv(\rho \mathbf{u}\otimes \mathbf{u})-(\mu+\zeta)\Delta \mathbf{u}-(\mu+\lambda-\zeta)\nabla \divv \mathbf{u}+\nabla p
	=2\zeta \nabla\times \mathbf{w}
	+(\nabla \times \mathbf{H})\times \mathbf{H},\\
	&\partial_t(\rho \mathbf{w})+\divv(\rho \mathbf{u}\otimes \mathbf{w})
	-\mu'\Delta \mathbf{w}-(\mu'+\lambda')\nabla \divv \mathbf{w}
	+4\zeta \mathbf{w}=2\zeta \nabla \times \mathbf{u},\\
&	\partial_t\mathbf{H}-\nabla\times(\mathbf{u}\times \mathbf{H})=-\nabla\times(\sigma\nabla\times \mathbf{H}),\\
	&\divv \mathbf{H}=0.
	\end{aligned}\right.	
	\label{3d magneto-micropolar fluid}
\end{equation}
The unknowns $\rho$, $\mathbf{u}=(u_1, u_2, u_3)$, $\mathbf{w}=(w_x, w_y, w_{1})$, $p$ and $\mathbf{H}=(h_1, h_2, h_3)$ stand for the fluid density, velocity, micro-rotational velocity, pressure and magnetic field, respectively. 

As an intermediate case, we are concerned with  2D magneto-micropolar fluid equations. Then, the velocity  and magnetic field can be understood as $\mathbf{u} = (u_1(x, y, t), u_2(x,
y, t), 0)$ and $\mathbf{H} = (h_1(x, y, t), h_2(x, y, t), 0)$. We also assume that the rotational axis of particles is parallel to the $Z$-axis, namely, $\mathbf{w} = (0, 0, w_{1}(x, y, t))$. 
The equation \eqref{3d magneto-micropolar fluid}, after being transformed simply by $\eqref{3d magneto-micropolar fluid}_{1}$, is written as the following equation, defined in a periodic domain
${(t, x, y) \mid t \in [0, T], x \in \mathbb{T}, y \in \mathbb{R}^{+}}$:
\begin{equation}
	\left\{\begin{aligned}
		&\partial_t\rho+\divv(\rho \mathbf{u})=0,\\
		&\rho\big(\partial_t\mathbf{u}+(\mathbf{u}\cdot\nabla)\mathbf{u}\big)-(\mu+\zeta)\Delta \mathbf{u}-(\mu+\lambda-\zeta)\nabla \divv \mathbf{u}+\nabla p
		=2\zeta \nabla^{\bot}{w_{1}}
		+(\nabla \times \mathbf{H})\times \mathbf{H},\\
		&\rho\big(\partial_tw_{1}+(\mathbf{u}\cdot\nabla)w_{1}\big)
		-\mu'\Delta {w_{1}}-(\mu'+\lambda')\nabla \divv {w_{1}}
		+4\zeta {w_{1}}=2\zeta \nabla \times \mathbf{u},\\
		&	\partial_t\mathbf{H}-\nabla\times(\mathbf{u}\times \mathbf{H})=-\nabla\times(\sigma\nabla\times \mathbf{H}),\\
		&\divv \mathbf{H}=0,
	\end{aligned}\right.	
	\label{2d magneto-micropolar fluid}
\end{equation}
where, the original three-dimensional unknowns are redefined,  then $\rho=\rho(x,t)$, $\mathbf{u} = (u_1(x, y, t), u_2(x,
y, t))$, $\mathbf{H} = (h_1(x, y, t), h_2(x, y, t))$ and $\nabla^{\bot} w_{1}=(-\partial_{y}w_{1}, -\partial_{x}w_{1})$. 
The function $p$ is $\gamma$-law pressure \cite{XTWT2022}, which takes the  following form
\begin{align}
	p=a\rho^{\gamma}, ~~ a>0, ~~ \gamma>1.
	\label{p}
\end{align}

 Furthermore, the constants $\mu$ and $\lambda$ stand for the shear and the bulk viscosity
coefficients of the flow; the constant $\zeta$ denotes the dynamics 
microrotation viscosity; $\mu'$ and $\lambda'$ are the angular
viscosities; the resistivity coefficient $\sigma$ is inversely proportional to the electrical conductivity constant. These parameters satisfy
$$
\mu, \zeta, \mu',  \sigma >0, \qquad 2\mu+3\lambda-4\zeta \geq 0, \qquad 2\mu'+3\lambda' \geq 0.
$$

This system \eqref{2d magneto-micropolar fluid} serves as a model for describing the motion of aggregates of small solid ferromagnetic particles relative  governed by the compressible magneto-micropolar  fluid equations, where the fluids have nonconstant density.      Consequently, it is considerably more complex than the classical incompressible magneto-micropolar fluid equations with constant density.      To fully define the system \eqref{2d magneto-micropolar fluid}, the following boundary conditions are imposed:
\begin{align}
\mathbf{u}|_{y=0}=\mathbf{0}, \quad w_{1}|_{y=0}=0, \quad (\partial_{y}h_{1}, h_{2})|_{y=0}=\mathbf{0}.
\label{boundary condition 1}
\end{align}
Here   the velocity, micro-rotational velocity  and magnetic field
satisfy the no-slip boundary condition, the Dirichlet boundary condition and  the perfectly conducting wall boundary condition, 
 respectively.

Motivated by the Prandtl boundary layer theory, this paper aims to examine the asymptotic behavior of the flow near the physical boundary $\{y=0\}$ for the equation \eqref{2d magneto-micropolar fluid} as 
the shear viscosity $\mu$,  the bulk viscosity  $\lambda$, 
the dynamics  microrotation viscosity $\zeta$,   the angular
viscosities $\mu', \lambda'$ and the resistivity coefficient $\sigma$
tend to zero.  In a rigorous sense, as the parameters vanish, equation \eqref{2d magneto-micropolar fluid} can be transformed into the 2D compressible ideal magneto-micropolar system:
\begin{equation}
	\left\{
	\begin{aligned}
		&\partial_t\rho^e +\divv(\rho^e \mathbf{u}^e)=0,\\
		&\rho^e\big(\partial_t\mathbf{u}^e+(\mathbf{u}^e\cdot\nabla)\mathbf{u}^e\big)+\nabla p(\rho^e)	-(\nabla\times \mathbf{H}^e)\times \mathbf{H}^e=0,\\
		&\rho^e\big(\partial_tw^e_1+(\mathbf{u}^e\cdot\nabla)w^e_{1}\big)=0,\\
		& \mathbf{H}^e_t-\nabla\times (\mathbf{u}^e\times \mathbf{H}^e)=0,\quad \divv \mathbf{H}^e=0,
		\\
		&(u_{2}, h_{2})|_{y=0}=\mathbf{0}.
	\end{aligned}
	\right.
\label{ideal}
\end{equation}
The inconsistency of boundary conditions between equations \eqref{boundary condition 1} and $\eqref{ideal}_{5}$ on the boundary $\{y=0\}$ leads to the formation of a thin boundary layer in the vanishing limits of the parameters, where the tangential velocity, micro-rotational velocity, and tangential magnetic field exhibit dramatic changes.

Prior to presenting a detailed analysis of boundary layer behavior,
we first review some known results from the established theoretical framework. The  boundary layer theory was first initiated by Prandtl,  L.  in 1904. 
Specifically, he conducted a formal analysis and derived a coupled elliptic-parabolic system, now known as the Prandtl equations, to describe the boundary layer phenomenon.
Early studies of the Prandtl equations primarily focused on two-dimensional incompressible flows, with considerable progress firstly made by Oleinik, O.-A. \cite{Oleinik-1966}. 
She proved the local-in-time well-posedness in H\"older spaces for  2D Prandtl equations under a monotonicity condition imposed on the tangential velocity. 
These results, along with an expanded introduction to boundary layer theory, were later presented in the classic monograph \cite{Oleinik-Samokhin-1999} by Oleinik and her collaborators.
By using  a so-called  Crocco transformation developed in \cite{Oleinik-1966,Oleinik-Samokhin-1999},  Xin and Zhang \cite{Xin-Zhang-2004} obtained a global existence of BV weak solutions to  2D unsteady Prandtl system  with the addition of favorable condition $(\partial_{x}P \leq 0)$ on pressure. 
Motivated by a direct energy method, instead of considering Crocco transformation, which can  recover Oleinik's well-posedness results,  Alexandre, Wang, Xu and  Yang \cite{Alexandre-Wang-Xu-Yang-2015}
proved that the solution exists locally with respect to time in the weight Sobolev spaces by  applying Nash-Moser iteration.
Masmoudi and Wong \cite{Masmoudi-Wong-2015}, distinct from \cite{Alexandre-Wang-Xu-Yang-2015}, established the local existence of solutions by leveraging a key cancellation property in the convection terms that overcomes the loss of $x$-derivatives in the tangential direction under the monotonicity assumption.
Still, there
are some results on the Prandtl equations  under the monotonicity assumption, see \cite{Xu-Zhang-2017,QL2025,Fan-Ruan-Yang-2021,Wang-Xie-Yang-2015,Q2025}.
In violation of Oleinik's monotonicity setting, blow-up of solutions, some instability and ill-posedness mechanisms are unfiltered out, cf. \cite{Grenier-2000,Hong-Hunter-2003,Gerard-Varet-Dormy-2010,Liu-Yang-2017,Gerard-Varet-Nguyen-2010,Guo-Nguyen-2011,E-Engquist-1997}.
Without Oleinik's monotonicity assumption on   2D cases, the solutions  and  initial data are
desired to be in the analytic or Gevrey class, cf. \cite{Sammartino-Caflisch-1-1998,Lombardo-Cannone-Sammartino-2003,Igntova-Vicol-2016,Paicu-Zhang-2021} for the framework of the analyticity and \cite{Gerard-Varet-Masmoudi-Gevrey-2015,Li-Yang-2020,Wang-Wang-Zhang-2024,Dietert-Gerard-Varet-2019} for the framework of the Gevrey class.

When $w_{1}=0$ and $\rho=\rm{constant}$, then the equations \eqref{2d magneto-micropolar fluid} reduces to 2D incompressible magnetohydrodynamic (MHD) equations.
Considering the high Reynolds number limit (the magnetic Reynolds number and the fluid Reynolds number tend to infinity at the same rate) of incompressible viscous MHD equations, then 2D incompressible MHD boundary layer equations can be derived from the non-slip boundary condition to velocity and perfectly conducting boundary condition to magnetic field. Recently, there are many mathematical results on two dimensional MHD boundary layer
system,
the local-in-time well-posedness was
obtained in Sobolev space without any monotonicity assumption on the tangential velocity under the condition of non-degeneracy initial horizontal magnetic field by \cite{LWXY2019,LXY2019}.
There are further works concerned with the global well-posedness of  2D MHD  boundary layer equations in the Sobolev spaces (cf.  \cite{CRWZ2020}),  analytic functions (cf. \cite{LZ2021}) and  Gevrey class (cf. \cite{TW2023}).

Many important problems on the magneto-micropolar boundary layer system remain open. 
For the incompressible  case,
Lin and Zhang \cite{LZ2021mm} proved  the local well-posedness to  2D magneto-micropolar boundary layer equations  with the Dirichlet boundary condition on  magnetic fields in the framework of analytic functions. 
If the  magnetic field is endowed with the perfectly conducting wall boundary condition and the initial tangential magnetic field is nondegenerate, Lin, Liu and Zhang \cite{LLZ2024} established the
local-in-time well-posedness  for the magneto-micropolar boundary layer equations in Sobolev spaces without monotonicity with the lower regularity initial data in $H^3$.
In the  Gevrey function spaces without any structural assumption, the local well-posedness of the two-dimensional magneto-micropolar
boundary layer system was established based on a new cancellation mechanism by Tan and Zhang \cite{TZ2022}.
On the other hand, for 2D magneto-micropolar boundary layer equations  without resistivity, the local well-posedness
theory in Sobolev spaces is established by Zou, Lin \cite{ZL2022} and
Zhong, Zhang \cite{ZZ2025}.

Compared to incompressible models, compressible boundary layer equations present greater analytical challenges due to their strong nonlinearities and complex interactions between physical quantities.
To date, few well-posedness results have been established for the compressible versions of the Prandtl equations and MHD boundary layer equations. The limited available results can be categorized into:

$\bullet$ compressible Prandtl equations
\begin{itemize}
\item[(1)] 
 Wang,  Xie,   Yang (2015)	\cite{Wang-Xie-Yang-2015} and 
 Fan, Ruan,  Yang (2021) 	\cite{Fan-Ruan-Yang-2021}: 	
 local well-posedness of 2D compressible Prandtl boundary layer
equations in Sobolev space;
\item[(2)] Ding and Gong (2017) \cite{DG2017}: 
global existence of weak solution to 2D compressible Prandtl equations;

\item [(3)] 
Chen, Huang and Li (2024) \cite{CHL2024}:
 global existence  and the large-time decay estimates of strong solution to 2D compressible Prandtl equations with small
 initial data, which is analytical in the tangential variable;
 
 \item [(4)]
Chen,  Ruan and   Yang (2025) \cite{CRY2025}:
 local well-posedness of  solutions to 3D compressible Prandtl equations, which is real-analytic in tangential direction and Sobolev regular in normal direction.
\end{itemize}

$\bullet$ compressible MHD boundary layer equations:
\begin{itemize}
	
	\item[(5)]  Huang, Liu and Yang (2019) \cite{HLY2019}:  local-in-time well-posedness of 2D MHD boundary layer system in
	weighted Sobolev spaces  under the non-degeneracy condition on the tangential magnetic field;
	
	\item[(6)] Li and Xie (2024) \cite{LX2024}: long time well-posedness of  2D compressible MHD boundary layer equations in Sobolev space. 
\end{itemize}
To our best knowledge, so far there is no result  concerning on the 
well-posedness of solutions for  2D compressible magneto-micropolar boundary layer equations.  This gap in theoretical understanding, combined with existing results from Prandtl and MHD boundary layer analyses, motivates our initial exploration of the compressible magneto-micropolar system.

In this paper, the main purpose
 is to achieve that  the local-in-time well-posedness for 2D compressible magneto-micropolar boundary layer equations.
To identify the terms in \eqref{ideal} whose contribution is pivotal for the boundary layer, we use the following scalings inspired by \cite{Oleinik-Samokhin-1999},
\begin{align}
\mu+\zeta=\varepsilon\tilde{\mu}, \quad \zeta={\sqrt{\varepsilon}}\tilde{\zeta}, \quad	(\lambda,  \mu',  \lambda', \sigma) = \varepsilon ( \tilde{\lambda}, \tilde{\mu'}, \tilde{\lambda'}, \tilde{\sigma}),
\end{align}
for the positive constants $\tilde{\mu},  \tilde{\lambda}, \tilde{\zeta}, \tilde{\mu'}, \tilde{\lambda'}, \tilde{\sigma}$.
Based on Prandtl's assertions, the thickness of the boundary layer is of order \( \sqrt{\varepsilon} \), and its behavior is governed by the Prandtl-type equations, which can be derived from the compressible magneto-micropolar system \eqref{2d magneto-micropolar fluid}-\eqref{boundary condition 1}. More precisely, inside the boundary layer, we introduce the rescalings
\[
\tilde{t} = t, \quad \tilde{x} = x, \quad \tilde{y} = \frac{y}{\sqrt{\varepsilon}},
\]
and define the new unknown functions as follows:
\[
(\tilde{\rho}, \tilde{u}_1, \tilde{w}_{1}, \tilde{h}_1)(\tilde{t}, \tilde{x}, \tilde{y}) = (\rho, u_1, w_{1}, h_1)(t, x, y), \quad (\tilde{u}_2, \tilde{h}_2)(\tilde{t}, \tilde{x}, \tilde{y}) = \frac{1}{\sqrt{\varepsilon}}(u_2, h_2)(t, x, y).
\]
Substituting this ansatz into the equations \eqref{2d magneto-micropolar fluid}, the leading-order terms yield
\begin{equation}
	\left\{
	\begin{aligned}
		&\partial_{t}\rho+(u_1\partial_{x}+u_2\partial_{y})\rho+\rho(\partial_{x} u_1+\partial_{y} u_2)=0,\\
		&\rho\{\partial_{t}u_1+(u_1\partial_{x}+u_2\partial_{y})u_1\}+\partial_{x}(p+\frac{1}{2}h^2_1)-(h_1\partial_{x}+h_2\partial_{y}) h_1-\mu\partial_{y}^2u_1=0,\\
		&\partial_{y}(p+\frac{1}{2}h^2_1)=0,\\
		&\rho\{\partial_{t} w_{1}+(u_1\partial_{x}+u_2\partial_{y})w_{1}\}+2\zeta\partial_{y}u_{1}-\mu'\partial_{y}^2w_{1}
		=0,\\
		&\partial_{t}h_1+\partial_{y}(u_2h_1-u_1h_2)-\sigma\partial_{y}^2h_1=0,\\
		&\partial_{t}h_2-\partial_{x}(u_2h_1-u_1h_2)-\sigma\partial_{y}^2h_2=0,\\
		&\partial_{x}h_1+\partial_y h_2=0,
		\\
		&(u_{1}, u_{2}, w_{1}, \partial_{y}h_{1}, h_{2})|_{y=0}=\mathbf{0},
		\\
		&\lim_{{y}\rightarrow +\infty}(\rho, u_1, w_{1}, h_1)({t},{x},{y})=(\rho^{e}, u^{e}_{1}, w^{e}_{1}, q_{1}^{e})({t},{x}, 0).
	\end{aligned}
	\right.
\label{mm boundary layer}
\end{equation}
From $\eqref{mm boundary layer}_{3}$ and $\eqref{mm boundary layer}_{9}$, we have
\begin{align}
\left(p+\frac{1}{2}h^2_1\right)(t, x, y) \equiv \left(a(\rho^{e})^\gamma+\frac {1}{2}(h_{1}^{e})^2\right)(t, x, 0)=P(t, x),
\label{1.1}
\end{align}
which means that
\begin{align}
	p(t, x, y)=P(t,x)-\frac{1}{2}h_1^2(t,x,y)>0.
	\label{1.2}
\end{align}
Moreover, it follows by virtue of \eqref{p} that
\begin{align}
	\frac{\partial_i{\rho}}{\rho}=\frac{\partial_ip}{\gamma p},\quad i=t, x, y.
\label{1.3}
\end{align}
 Plugging  \eqref{1.2}-\eqref{1.3} into $\eqref{mm boundary layer}_{1}$, we arrive at
\begin{align}
\begin{split}
	\partial_{x}u_{1}+\partial_{y}u_{2}
	&=-\frac{\partial_{t} p+(u_{1}\partial_{x}+u_{2}\partial_{y})p}{\gamma p}\\
	&=-\frac{P_{t}-h_{1}\partial_{t}h_{1}+P_{x}u_{1}-h_{1}(u_{1}\partial_{x}+u_{2}\partial_{y})h_{1}}{\gamma(P-\frac{1}{2} h_{1}^2)}
	.
\end{split}
\label{1.4}
\end{align}
By   $\eqref{mm boundary layer}_{5}$-$\eqref{mm boundary layer}_{8}$, it is straightforward to check that 
\begin{align}
\partial_th_1+(u_1\partial_x+u_2\partial_y)h_1
=-h_1(\partial_x u_1+\partial_y u_2)+(h_1\partial_x+h_2\partial_y)u_1+\sigma\partial_{y}^2h_1,
\label{1.5}
\end{align}
which, together with \eqref{1.1}, implies
\begin{align}
		\left(\frac{h_1^{2}}{\gamma(P-\frac{1}{2} h_{1}^2)}+1\right) \partial_{x}u_{1}+\partial_{y}u_{2}
		=-\frac{P_{t}+P_{x}u_{1}}{\gamma(P-\frac{1}{2} h_{1}^2)}+
		\frac{h_{1}(h_1\partial_x+h_2\partial_y)u_1+\sigma h_{1}\partial_{y}^2h_1}{\gamma(P-\frac{1}{2} h_{1}^2)},
	\label{1.6}
\end{align}
and then, we have
\begin{align}
	 \partial_{x}u_{1}+\partial_{y}u_{2}
	=-\frac{P_{t}+P_{x}u_{1}}{\gamma(P-\frac{1}{2} h_{1}^2)+h_{1}^2}+
	\frac{h_{1}(h_1\partial_x+h_2\partial_y)u_1+\sigma h_{1}\partial_{y}^2h_1}{\gamma(P-\frac{1}{2} h_{1}^2)+h_{1}^2}.
	\label{1.7}
\end{align}
Thus, boundary value problem \eqref{mm boundary layer} after imposing  the initial data can be rewritten in the following form
\begin{equation}
	\left\{
	\begin{aligned}
		&\partial_{t}u_1+(u_1\partial_{x}+u_2\partial_{y})u_1+\left( \frac{a}{P-\frac{1}{2}h^2_1}\right)^{\frac{1}{\gamma}} P_{x}-\left( \frac{a}{P-\frac{1}{2}h^2_1}\right)^{\frac{1}{\gamma}} (h_1\partial_{x}+h_2\partial_{y}) h_1\\
		&\quad-\mu\left( \frac{a}{P-\frac{1}{2}h^2_1}\right)^{\frac{1}{\gamma}} \partial_{y}^2u_1=0,\\
		&\partial_{t} w_{1}+(u_1\partial_{x}+u_2\partial_{y})w_{1}+2\zeta\left( \frac{a}{P-\frac{1}{2}h^2_1}\right)^{\frac{1}{\gamma}} \partial_{y}u_{1}-\mu'\left( \frac{a}{P-\frac{1}{2}h^2_1}\right)^{\frac{1}{\gamma}} \partial_{y}^2w_{1}
		=0,\\
		&\partial_th_1+(u_1\partial_x+u_2\partial_y)h_1
		-h_1\frac{P_{t}+P_{x}u_{1}}{\gamma(P-\frac{1}{2} h_{1}^2)+h_{1}^{2}}
		-\frac{ \gamma(P-\frac{1}{2} h_{1}^2)}{ \gamma(P-\frac{1}{2} h_{1}^2)+h_{1}^2}(h_1\partial_x+h_2\partial_y)u_1
		\\
		&\quad-\sigma\frac{ \gamma(P-\frac{1}{2} h_{1}^2)}{ \gamma(P-\frac{1}{2} h_{1}^2)+h_{1}^2}\partial_{y}^2h_1=0,\\
		&\partial_{x}u_{1}+\partial_{y}u_{2}
		=-\frac{P_{t}+P_{x}u_{1}}{\gamma(P-\frac{1}{2} h_{1}^2)+h_{1}^2}+
		\frac{h_{1}(h_1\partial_x+h_2\partial_y)u_1+\sigma h_{1}\partial_{y}^2h_1}{ \gamma(P-\frac{1}{2} h_{1}^2)+h_{1}^2},\\
		&\partial_{x}h_1+\partial_y h_2=0,\\
		&(u_{1}, w_{1}, h_{1})|_{t=0}=(u_{1,0}, w_{1,0}, h_{1,0})(x,y),\\
		&(u_{1}, u_{2}, w_{1}, \partial_{y}h_{1}, h_{2})|_{y=0}=\mathbf{0},
		\\
		&\lim_{{y}\rightarrow +\infty}( u_1, w_{1}, h_1)({t},{x},{y})=(\rho^{e}, u^{e}_{1}, w^{e}_{1}, q_{1}^{e})({t},{x}, 0)=(U, I, H)(t,x),
	\end{aligned}
	\right.
	\label{1 mm boundary layer}
\end{equation}
where the outflow  $U(x,t), I(x,t)$ and $H(x,t)$ are the trace of the tangential velocity, which satisfy the Bernoulli's law
\begin{equation}
	\left\{
	\begin{aligned}
		&U_t +UU_x +\left( \frac{a}{P-\frac{1}{2}H^{2}}\right)^{\frac{1}{\gamma}}P_{x}-\left( \frac{a}{P-\frac{1}{2}H^{2}}\right)^{\frac{1}{\gamma}}HH_x=0,\\
		&I_t+UI_x=0,\\
		&H_t+UH_x
		-\frac{P_{t}+P_{x}U}{\gamma(P-\frac{1}{2} H^2)+H^2}H
		-\frac{\gamma(P-\frac{1}{2} H^2)}{\gamma(P-\frac{1}{2} H^2)+H^2}HU_x
		=0.
	\end{aligned}
\label{berlaw}
	\right.
\end{equation}

The main result of this paper is stated in the following theorem.
\begin{theorem}\label{t1}
Suppose the  outflow $(U, I, H, P)(t, x)$ in the equations \eqref{berlaw} is smooth, and  the initial data $(u_{1,0},w_{1,0}, h_{1,0})(x,y)$  are smooth, compatible with the boundary condition and  satisfy
\begin{align}
 h_{1,0}(x,y)\geq 2\delta, \quad \frac{1}{2}\big(h_{1,0}(x,y)\big)^2 \leq P(0,x)-2\delta,
\label{im}
\end{align}
for $t\in[0,T],  (x,y)\in\mathbb{T}\times\mathbb{R}_+$ with some constant $\delta>0$.
Then there exists  $0< T_{*} \leq T$ such that the initial-boundary value problem \eqref{1 mm boundary layer} admits  a unique classical solution $(u_{1}, u_{2}, w_{1}, h_{1}, h_{2} )$  
satisfying 
\begin{align*}
 h_{1}(t,x,y)\geq \delta, ~  ~  \frac{1}{2}h_{1}^2(x,y) \leq P(t,x)-\delta,
\end{align*}
in the region $\displaystyle D_{T_*}:=\{(t,x,y)|t\in[0,T_*], x \in \mathbb{T}, y \in \mathbb{R}_+\}$.
Furthermore, we have that
$(u_{1}, w_{1}, h_{1}), \partial_y(u_{1}, w_{1}, h_{1})$
and $\partial_y^2(u_{1}, w_{1}, h_{1})$ are continuous and bounded in $D_{T_*}$. Additionally,
$\partial_t (u_{1}, w_{1}, h_{1})$, $\partial_x(u_{1}, h_{1})$, $(u_{2}, h_{2})$ and $\partial_y(u_{2}, h_{2})$ are continuous and bounded in any compact set of $D_{T_*}$.
\end{theorem}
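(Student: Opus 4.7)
The plan is to adapt the nonlinear coordinate transformation framework of \cite{LXY2019} to the compressible magneto-micropolar setting. Since by hypothesis \eqref{im} the tangential magnetic field satisfies $h_{1,0}\ge 2\delta$, introduce new independent variables $(\tau,\xi,\eta)=(t,x,\varphi(t,x,y))$, where $\varphi$ is chosen so that after the change of variables the unknown tangential magnetic field is pinned to a reference profile (equivalently, so that the magnetic-field lines become straight along $\eta$). In these coordinates the directional derivative $h_1\partial_x+h_2\partial_y$ collapses to a pure $\partial_\xi$, which is exactly the cancellation mechanism that neutralizes the apparent loss of $\xi$-derivative in the velocity and micro-rotation equations of \eqref{1 mm boundary layer}. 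The non-degeneracy $h_1\ge\delta$ and the strict positivity of $P-\tfrac12 h_1^2$ make the transformation a diffeomorphism on a short time interval depending on $\delta$.

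The implementation proceeds in four steps. First, subtract the outflow $(U,I,H)$ from \eqref{1 mm boundary layer} so that the unknowns decay as $y\to\infty$, and set up a Picard-type iteration in which $(u_1^{n},w_1^{n},h_1^{n})$ solves the system linearized at $(u_1^{n-1},w_1^{n-1},h_1^{n-1})$, with $u_2^{n},h_2^{n}$ reconstructed from the divergence-type relations $\eqref{1 mm boundary layer}_{4}$--$\eqref{1 mm boundary layer}_{5}$. Second, apply the coordinate transformation, rewrite the linearized system in $(\tau,\xi,\eta)$, and derive uniform-in-$n$ a priori estimates in a weighted Sobolev space (with a polynomial weight in $\eta$ to compensate the unbounded domain). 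The critical structural point is that after the transformation the worst terms $(h_1\partial_x+h_2\partial_y)u_1$, $(h_1\partial_x+h_2\partial_y)h_1$ become $\partial_\xi$-type terms and the corresponding top-order commutators cancel under integration by parts. Third, run a Gronwall argument on a short interval $[0,T_\ast]$, using $h_1\ge\delta$ and $\tfrac12h_1^2\le P-\delta$ to keep the nonlinear factor $\bigl(a/(P-\tfrac12h_1^2)\bigr)^{1/\gamma}$ and its derivatives uniformly controlled throughout the iteration. Fourth, establish contraction of the sequence in a lower-regularity norm, pass to the limit, transform back to $(t,x,y)$, and obtain uniqueness by a standard stability estimate; the continuity and boundedness of $(u_1,w_1,h_1)$ together with $\partial_y$-derivatives up to order two follow from Sobolev embedding, while the remaining derivatives of $(u_2,h_2)$ are read off from $\eqref{1 mm boundary layer}_{4}$--$\eqref{1 mm boundary layer}_{5}$.

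The main obstacle is handling, simultaneously, two features that are absent in the incompressible MHD case \cite{LXY2019} and in the incompressible magneto-micropolar case \cite{LLZ2024}. On the one hand, compressibility introduces the nonlinear coefficient $\bigl(a/(P-\tfrac12 h_1^2)\bigr)^{1/\gamma}$ in front of every principal-order term of \eqref{1 mm boundary layer}, and this coefficient depends nonlinearly on the same unknown $h_1$ whose top-order norm we are trying to control; its derivatives must be tracked through commutators with the coordinate change so that they do not spoil the cancellation. On the other hand, the divergence relation is no longer $\partial_x u_1+\partial_y u_2=0$ but contains the extra dissipative contribution $\sigma h_1\partial_y^2 h_1/(\gamma(P-\tfrac12 h_1^2)+h_1^2)$, which raises the effective order of $u_2$ and couples it to the magnetic diffusion. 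Consequently, the cancellation identity used for incompressible MHD has to be rebuilt with careful bookkeeping of commutators among the nonlinear $\gamma$-power coefficient, the new $\sigma$-contribution in the divergence relation, the micro-rotation coupling $2\zeta\partial_y u_1$, and the coordinate change $\varphi$. Once these commutator and positivity estimates are closed uniformly on an interval $[0,T_\ast]$ determined by $\delta$ and the size of the initial data, the remainder of the argument is a routine Picard closure.
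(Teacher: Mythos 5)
Your plan correctly identifies the framework (the nonlinear transformation of \cite{LXY2019} followed by a Picard iteration), but the description of the key mechanism is partly wrong and the most delicate structural ingredient is missing.

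First, the transformation. The paper introduces the magnetic stream function $\psi$ with $h_1=\partial_y\psi$, $h_2=-\partial_x\psi$, and passes to $\bar y=\psi(t,x,y)$; in the new variables $\hat h_1(\bar t,\bar x,\bar y)=h_1(t,x,y)$ \emph{remains an unknown}. Your parenthetical ``equivalently, so that the magnetic-field lines become straight along $\eta$'' is the correct description, but the alternative you offer --- pinning $h_1$ to a reference profile --- is \emph{not} equivalent to it and is not what is done. More importantly, you justify the transformation only by the collapse of $h_1\partial_x+h_2\partial_y$ into $h_1\partial_{\bar x}$. That indeed removes $h_2$, but it does \emph{not} remove the more dangerous term: $u_2$ appears in the material derivative $\partial_t+u_1\partial_x+u_2\partial_y$, and $u_2$ is what causes the genuine loss of an $x$-derivative through the divergence relation $\eqref{1 mm boundary layer}_4$. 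The paper's crucial extra observation is that $\psi$ itself solves the scalar heat-type equation \eqref{stream-equation}, $\partial_t\psi+(u_1\partial_x+u_2\partial_y)\psi=\sigma\partial_y^2\psi$, so under $\bar y=\psi$ the full material derivative becomes $\partial_{\bar t}+u_1\partial_{\bar x}+\sigma(\partial_y^2\psi)\partial_{\bar y}$ with $\partial_y^2\psi$ expressible through $\hat h_1$ alone. It is precisely this that makes $u_2$ and $h_2$ disappear entirely from the transformed system \eqref{hat}; your sketch never uses \eqref{stream-equation}, so as written it does not eliminate $u_2$ and the loss of derivative is not actually resolved.

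Second, symmetrization. Even after the transformation, the transformed system still has an apparently unbalanced first-order coupling: in the $\hat u_1$-equation the term $-\mathcal{A}\,\hat h_1\partial_{\bar x}\hat h_1$ appears, and in the $\hat h_1$-equation the term $-\tfrac{\gamma(P-\hat h_1^2/2)}{Q}\hat h_1\partial_{\bar x}\hat u_1$ appears, with unequal nonlinear coefficients (where $\mathcal{A}=(a/(P-\tfrac12\hat h_1^2))^{1/\gamma}$ and $Q=\gamma(P-\tfrac12\hat h_1^2)+\hat h_1^2$). To close the $\mathcal{H}^k$ energy estimate one needs the top-order $\partial_x$-contributions of these two to cancel under the $L^2$ pairing. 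The paper achieves this by switching the unknown to $q_1=\tfrac12\hat h_1^2$ and multiplying the system by the explicitly constructed positive symmetric matrix $\mathbf{S}$ of \eqref{S}, which renders $\mathbf{A}=\mathbf{SA}_0$ symmetric and $\mathbf{B}=\mathbf{SB}_0$ positive definite; see \eqref{matrix}--\eqref{3m}. This symmetrizer is specific to the compressible, $\gamma$-law structure and is not present in the incompressible references --- it is the main new algebraic ingredient. Your proposal acknowledges that ``the cancellation identity has to be rebuilt with careful bookkeeping of commutators'' but gives no candidate for the symmetrizer, so the a priori estimate you invoke in Step 2 is unsupported.

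Two smaller remarks. Your Steps 1--2 set up the Picard iteration \emph{before} transforming, which would give each iterate its own $\psi^{n-1}$-dependent coordinate frame; the paper transforms the full nonlinear system once, then iterates in a fixed $(\bar t,\bar x,\bar y)$ frame, which is considerably cleaner. Finally, the paper works in unweighted $\mathcal{H}^k(\Omega_T)$ after subtracting the outflow; the polynomial weight you introduce is not needed and not used here.
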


 The remainder of the paper is organized as follows.
 In Sections \ref{s2},   we reformulate the initial-boundary value problem \eqref{1 mm boundary layer} to overcome the difficulty of  loss of $x$-derivatives by a coordinate transformation, along with
 stating  some elementary lemmas. In Section \ref{s3}, we derive a priori estimate and  obtain the well-posedness theory on the reformulated compressible magneto-micropolar boundary layer equations.
Finally, local-in-time existence and uniqueness of the solution for the original boundary layer equations \eqref{1 mm boundary layer} will be studied
in Section \ref{s4}.
\section{Preliminary and elementary lemma}\label{s2}

\subsection{Reformulation of the system }
The main difficulty in the local well-posedness theory for the initial-boundary value problem \eqref{1 mm boundary layer} stems from the derivative loss in the $x$-direction for the normal velocity field  $u_{2}$ and the normal magnetic field $h_{2}$.
To address this issue, inspired by \cite{HLY2019,LXY2019}, we utilize the incompressible condition associated with the magnetic field in \eqref{1 mm boundary layer} and introduce a stream function, denoted by $\psi(t,x,y)$, which incorporates both the tangential and normal components of the magnetic field. Specifically, the stream function satisfies the following relations: 
\begin{align}
	h_1=\partial_y\psi,\qquad h_2=-\partial_x\psi,\qquad~ \psi|_{y=0}=0.
\label{stream-function}
\end{align}
Moreover, combining the  equation \eqref{1.5} with the boundary condition $\eqref{1 mm boundary layer}_{7}$, one may directly verify  that $\psi(t,x,y)$ is governed by the following equation obeys the evolution equation:
\begin{align}
\partial_t\psi+(u_1\partial_x+u_2\partial_y)\psi=\sigma\partial_{y}^2\psi.
\label{stream-equation}
\end{align}
Under the assumption of non-zero tangential magnetic component:
\begin{align}
	h_1=\partial_y\psi>0,
\label{2.1}
\end{align}
we can introduce the following invertible transformation
\begin{align}
	\bar{t}=t, \qquad  \bar{x}=x, \qquad \bar{y}=\psi(t,x,y),
\label{2.2}
\end{align}
and new unknown functions
\begin{align}
	(\hat{u}_1, \hat{w}_{1}, \hat{h}_1)(\bar{t},\bar{x},\bar{y}):=(u_1,w_{1},h_1)(t,x,y).
	\label{2.3}
\end{align}

Under this new coordinate, the region $\{(t, x, y)|t\in [0, T], x\in \mathbb{T},  y\in\mathbb{R}_+\}$ is mapped into $\{(\bar{t}, \bar{x}, \bar{y})|\bar{t}\in [0, T],  \bar{x}\in\mathbb{T},  \bar{y}\in\mathbb{R}_+\}$, and the boundary of $\{ y=0\}$ ($\{ y=+\infty\}$ respectively) becomes the boundary of $\{ \bar{y}=0\}$ ($\{ \bar{y}=+\infty\}$ respectively). Also initial-boundary value problem \eqref{1 mm boundary layer} for $(\hat{u}_1, \hat{w}_{1},\hat{h}_1)(\bar{t},\bar{x},\bar{y})$  can be transformed into
\begin{equation}
	\left\{
	\begin{aligned}
		&\partial_{\bar{t}}\hat{u}_1+\hat{u}_1\partial_{\bar{x}}\hat{u}_1+\left( \frac{a}{P-\frac{1}{2}\hat{h}^2_1}\right)^{\frac{1}{\gamma}} P_{\bar{x}}-\left( \frac{a}{P-\frac{1}{2}\hat{h}^2_1}\right)^{\frac{1}{\gamma}} h_1\partial_{\bar{x}} h_1+\left\lbrace \sigma-\mu\left( \frac{a}{P-\frac{1}{2}\hat{h}^2_1}\right)^{\frac{1}{\gamma}}\right\rbrace  \hat{h}_{1}\partial_{\bar{y}}\hat{h}_{1} \partial_{\bar{y}}\hat{u}_{1} \\
		&\quad-\mu\left( \frac{a}{P-\frac{1}{2}\hat{h}^2_1}\right)^{\frac{1}{\gamma}}
		\hat{h}_{1}^{2} \partial_{\bar{y}}^2\hat{u}_1=0,\\
		&\partial_{\bar{t}}\hat{w}_1+\hat{u}_1\partial_{\bar{x}}\hat{w}_1+2\zeta\left( \frac{a}{P-\frac{1}{2}\hat{h}^2_1}\right)^{\frac{1}{\gamma}}\hat{h}_{1} \partial_{\bar{y}}\hat{u}_{1}
		+\left\lbrace \sigma-\mu'\left( \frac{a}{P-\frac{1}{2}\hat{h}^2_1}\right)^{\frac{1}{\gamma}}\right\rbrace  \hat{h}_{1}\partial_{\bar{y}}\hat{h}_{1} \partial_{\bar{y}}\hat{w}_{1}\\
		&\quad-\mu'\left( \frac{a}{P-\frac{1}{2}\hat{h}^2_1}\right)^{\frac{1}{\gamma}}
		\hat{h}_{1}^{2} \partial_{\bar{y}}^2\hat{w}_{1}
		=0,\\
		&\partial_{\bar{t}}\hat{h}_{1}+\hat{u}_1\partial_{\bar{x}}\hat{h}_{1}
		-\hat{h}_{1}\frac{P_{t}+P_{x}\hat{u}_{1}}{\gamma(P-\frac{1}{2} \hat{h}_{1}^2)+\hat{h}_{1}^{2}}
		-\frac{ \gamma(P-\frac{1}{2} \hat{h}_{1}^2)}{ \gamma(P-\frac{1}{2} \hat{h}_{1}^2)+\hat{h}_{1}^2}\hat{h}_{1}\partial_{\bar{x}}\hat{u}_1
		+
		\sigma\left\lbrace 1-\frac{ \gamma(P-\frac{1}{2} \hat{h}_{1}^2)}{ \gamma(P-\frac{1}{2} \hat{h}_{1}^2)+\hat{h}_{1}^2}\right\rbrace  \hat{h}_{1}(\partial_{\bar{y}}\hat{h}_{1})^{2}\\
		&\quad
		-\sigma\frac{ \gamma(P-\frac{1}{2} \hat{h}_{1}^2)}{ \gamma(P-\frac{1}{2} \hat{h}_{1}^2)+\hat{h}_{1}^2}\hat{h}_{1}^{2}\partial_{y}^2\hat{h}_{1}=0,
		\\
		&(\hat{u}_1, \hat{w}_1, \partial_{\bar{y}}\hat{h}_{1})|_{\bar{y}=0}=\mathbf{0}, \\ &\lim_{\bar{y}\rightarrow +\infty}(\hat{u}_1, \hat{w}_{1}, \hat{h}_{1})(\bar{t},\bar{x},\bar{y})=(U,I,H)(\bar{t},\bar{x}),\\
		&(\hat{u}_{1},\hat{w}_{1},\hat{h}_{1})|_{\bar{t}=0}=(\hat{u}_{1,0},\hat{w}_{1,0}, \hat{h}_{1,0})(\bar{x},\bar{y}).
	\end{aligned}
	\label{hat}
	\right.
\end{equation}

\subsection{Some notations and elementary lemmas}
In this subsection, we first begin by introducing the Sobolev spaces required for our next analysis.
Set
\begin{align*}
	\Omega=\{(x,y): x\in\mathbb{T}, y\in\mathbb{R}_+\},
\end{align*}
and
\begin{align*}
	\Omega_T=[0,T]\times\Omega=\{(t,x,y): t\in[0,T], x\in\mathbb{T}, y\in\mathbb{R}_+\}.
\end{align*}
Denote by $H^k(\Omega)$ the classical Sobolev spaces of  function $f\in H^k(\Omega)$ such that 
$$
\|f\|_{H^k(\Omega)}:=\left(\sum_{|\alpha_1|+|\alpha_2|\leq k}
\|\partial^{\alpha_1}_x\partial^{\alpha_2}_y f\|^2_{L^2(\Omega)}\right)^\frac{1}{2}<\infty.
$$
The derivative operator with multi-index $\alpha = (\alpha_1, \alpha_2, \alpha_3) \in \mathbb{N}^3$ is denoted as
\begin{align*}
	\partial^\alpha=\partial_t^{\alpha_1}\partial_x^{\alpha_2}\partial_y^{\alpha_3},\quad\mbox{with}\quad |\alpha|=\alpha_1+\alpha_2+\alpha_3.
\end{align*}
The Sobolev space and norm are thus defined as
\begin{align*}
	\mathcal{H}^k(\Omega_T)=\left\lbrace f(t,x,y): \Omega_T\rightarrow \mathbb{R}, \quad \|f\|_{\mathcal{H}^k(\Omega_T)}<\infty\right\rbrace 
\end{align*}
with
\begin{align*}
	\|f(t)\|_{\mathcal{H}^k(\Omega)}=\sup_{0\leq t< T}\|f(t)\|_{\mathcal{H}^k(\Omega)}:=\sup_{0\leq t< T}\left(\sum_{|\alpha|\leq k}
	\|\partial^{\alpha}f(t)\|^2_{L^2(\Omega)}\right)^\frac{1}{2}.
\end{align*}

Next, we give some useful inequalities whose proof  can be found in \cite{HLY2019,Masmoudi-Wong-2015,LXY2019}.

\begin{lemma}(Sobolev type inequality)\label{sobolevtype}
	Let $f:\Omega \rightarrow \mathbb{R}$.  Then there exists a constant $C>0$ such that
	\begin{eqnarray}
		\|f\|_{L^{\infty}(\Omega)}    \leq C
		\left(\|f\|_{L^{2}(\Omega )}   +\|\partial_{x}f\|_{L^{2}(\Omega)}  +\|\partial_{y }^{2}f\|_{L^{2}(\Omega )} \right).
	\end{eqnarray}
\end{lemma}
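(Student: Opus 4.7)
The plan is to prove the inequality via Fourier analysis, after first extending $f$ from the half-space $\Omega=\mathbb{T}\times\mathbb{R}_+$ to the whole strip $\mathbb{T}\times\mathbb{R}$. The right-hand side controls $f$ and only one $x$-derivative together with two $y$-derivatives, so the standard $H^2\hookrightarrow L^\infty$ embedding does not apply; however, the anisotropic multiplier $(1+k^2+\xi^4)^{-1/2}$ is square-integrable on $\mathbb{Z}\times\mathbb{R}$, which is exactly what one needs.

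First I would construct a Seeley-type reflection: by density assume $f\in C^\infty(\bar\Omega)$, and define the extension $F(x,y)=f(x,y)$ for $y\ge 0$ and $F(x,y)=3f(x,-y)-2f(x,-2y)$ for $y<0$. The coefficients $\{3,-2\}$ and reflection factors $\{1,2\}$ are chosen precisely so that the linear system $\lambda_1+\lambda_2=1$, $\lambda_1\alpha_1+\lambda_2\alpha_2=-1$ is satisfied; this guarantees that both $F$ and $\partial_y F$ are continuous across $y=0$, so that $\partial_y^2 F$ remains a genuine $L^2$ function (no singular part at the boundary). A direct change of variables then yields
\begin{equation*}
\|F\|_{L^2(\mathbb{T}\times\mathbb{R})}+\|\partial_x F\|_{L^2}+\|\partial_y^2 F\|_{L^2}\le C\bigl(\|f\|_{L^2(\Omega)}+\|\partial_x f\|_{L^2(\Omega)}+\|\partial_y^2 f\|_{L^2(\Omega)}\bigr),
\end{equation*}
while clearly $\|f\|_{L^\infty(\Omega)}\le\|F\|_{L^\infty(\mathbb{T}\times\mathbb{R})}$. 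Thus it suffices to prove the inequality on $\mathbb{T}\times\mathbb{R}$.

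On the strip I would expand $F$ by Fourier series in $x$ and Fourier transform in $y$: $F(x,y)=\sum_{k\in\mathbb{Z}}\int_\mathbb{R}\hat F(k,\xi)e^{i(kx+\xi y)}\,d\xi$. By Plancherel the extended right-hand side is comparable to $\bigl(\sum_k\int(1+k^2+\xi^4)|\hat F(k,\xi)|^2\,d\xi\bigr)^{1/2}$. For the left-hand side, Fourier inversion and Cauchy-Schwarz give
\begin{equation*}
\|F\|_{L^\infty}\le\sum_k\int|\hat F(k,\xi)|\,d\xi\le\Bigl(\sum_k\int\frac{d\xi}{1+k^2+\xi^4}\Bigr)^{1/2}\Bigl(\sum_k\int(1+k^2+\xi^4)|\hat F(k,\xi)|^2\,d\xi\Bigr)^{1/2}.
\end{equation*}
The substitution $\xi=(1+k^2)^{1/4}u$ yields $\int_\mathbb{R}\frac{d\xi}{1+k^2+\xi^4}=(1+k^2)^{-3/4}\int_\mathbb{R}\frac{du}{1+u^4}$, and $\sum_k(1+k^2)^{-3/4}$ converges since the exponent $3/2>1$. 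Combining with the extension bound yields the lemma.

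The main technical point is getting the anisotropic scaling right: it is the interplay between one $x$-derivative and two $y$-derivatives that makes the multiplier $(1+k^2+\xi^4)^{-1}$ integrable on $\mathbb{Z}\times\mathbb{R}$, and that also forces the extension to be a \emph{second-order} Seeley reflection (simple even or odd reflection is not enough, since $\partial_y^2 F$ would acquire a boundary delta). Once the extension is constructed the Fourier computation is routine, so the only obstacle of substance is that reflection-and-continuity argument.
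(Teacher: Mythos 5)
Your proof is correct. Note that the paper itself does not prove this lemma---it defers to \cite{HLY2019,Masmoudi-Wong-2015,LXY2019}---so what you have written is a self-contained substitute rather than a variant of an in-paper argument. Both of your ingredients check out: the second-order Seeley reflection $F(x,y)=3f(x,-y)-2f(x,-2y)$ does match $F$ and $\partial_yF$ across $y=0$ (one verifies $3-2=1$ and $-3\cdot 1+(-2)\cdot(-2)\cdot(-1)\cdot(-1)=-3+4=1$), so the distributional $\partial_y^2F$ is the piecewise classical derivative with no boundary measure and the change of variables gives the stated extension bound; and the multiplier computation $\int_{\mathbb{R}}(1+k^2+\xi^4)^{-1}\,d\xi=(1+k^2)^{-3/4}\int_{\mathbb{R}}(1+u^4)^{-1}\,du$ together with $\sum_k(1+k^2)^{-3/4}<\infty$ is exactly the anisotropic summability that makes the Cauchy--Schwarz step close. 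The standard route in the cited references is slightly different and avoids the extension altogether: expand $f$ in a Fourier series in $x$ only, apply the one-dimensional Landau--Kolmogorov/Agmon-type inequality $\|g\|_{L^\infty(\mathbb{R}_+)}\le C\bigl(\|g\|_{L^2(\mathbb{R}_+)}+\|g\|_{L^2(\mathbb{R}_+)}^{3/4}\|g''\|_{L^2(\mathbb{R}_+)}^{1/4}\bigr)$ to each mode on the half-line, and then sum over $k$ by Cauchy--Schwarz against the weight $(1+k^2)^{-1/2}$; this is analytically equivalent to your multiplier bound but works directly on $\mathbb{R}_+$. The only points worth making explicit in a polished version of your argument are the (routine) density of $C^\infty(\bar\Omega)$ functions in the anisotropic space $\{f: f,\partial_xf,\partial_y^2f\in L^2(\Omega)\}$ and the passage to the limit in the $L^\infty$ bound, since the smooth approximants become uniformly Cauchy and hence the limit has a continuous bounded representative.
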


\begin{lemma}\label{trace}
For any proper  $f$ and $g$ in $\Omega_T$, if $\displaystyle \lim\limits_{y \rightarrow+\infty}(fg)(t, x, y)=0$, then the following  inequality holds
\begin{align}
	\left|\int_{\mathbb{T}}(fg)(t, x,0) dx \right|\leq\|\partial_y f(t,\cdot)\|_{L^2(\Omega)}\|g(t,\cdot)\|_{L^2(\Omega)}+\|\partial_y g(t,\cdot)\|_{L^2(\Omega)}\|f(t,\cdot)\|_{L^2(\Omega)},
	\label{trace1}
\end{align}
for all $t \in[0,T]$.
\end{lemma}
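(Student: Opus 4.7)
The plan is to reduce this trace inequality to the fundamental theorem of calculus in the normal variable, combined with two applications of Cauchy--Schwarz on $\Omega = \mathbb{T}\times\mathbb{R}_+$. The decay hypothesis $\lim_{y\to+\infty}(fg)(t,x,y)=0$ is exactly what lets us represent the boundary trace at $\{y=0\}$ as an integral over $y\in(0,+\infty)$ of a derivative, which is the whole point of the argument.

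First, I would fix $t\in[0,T]$ and, for each $x\in\mathbb{T}$, apply the fundamental theorem of calculus in $y$ to the product $fg$. Using the vanishing of $fg$ at $y=+\infty$ together with the product rule, this gives
\begin{equation*}
(fg)(t,x,0) \;=\; -\int_{0}^{+\infty}\partial_y(fg)(t,x,y)\,dy \;=\; -\int_{0}^{+\infty}\bigl(f\,\partial_y g + g\,\partial_y f\bigr)(t,x,y)\,dy.
\end{equation*}

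Next, I would integrate both sides in $x$ over $\mathbb{T}$, move the absolute value inside, and invoke Fubini to merge the iterated integral into a double integral on $\Omega$, obtaining
\begin{equation*}
\left|\int_{\mathbb{T}}(fg)(t,x,0)\,dx\right| \;\leq\; \int_{\Omega}|f\,\partial_y g|(t,x,y)\,dxdy \;+\; \int_{\Omega}|g\,\partial_y f|(t,x,y)\,dxdy.
\end{equation*}
Finally, I would apply Cauchy--Schwarz separately to each of the two $L^1(\Omega)$ integrals, producing
\begin{equation*}
\int_{\Omega}|f\,\partial_y g|\,dxdy \;\leq\; \|f(t,\cdot)\|_{L^2(\Omega)}\,\|\partial_y g(t,\cdot)\|_{L^2(\Omega)},
\end{equation*}
and the symmetric estimate for the other term, which combined yield exactly \eqref{trace1}.

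There is no real obstacle here: the only subtle point is the meaning of ``proper'' $f$ and $g$, which must be strong enough to legitimize both the fundamental theorem of calculus step and the use of Fubini. In practice this amounts to requiring $f,g$ and $\partial_y f,\partial_y g$ to lie in $L^2(\Omega)$ (with $fg$ absolutely continuous in $y$ and decaying at infinity), in which case the right-hand side of \eqref{trace1} is already finite and the identity above holds pointwise a.e.\ in $x$. A standard density argument from test functions satisfying the stated decay extends the bound to the full function class used in Section~\ref{s3}.
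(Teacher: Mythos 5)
Your proof is correct and is exactly the standard argument: the paper does not prove Lemma \ref{trace} itself but cites \cite{HLY2019,Masmoudi-Wong-2015,LXY2019}, where the same reduction via the fundamental theorem of calculus in $y$, the product rule, and Cauchy--Schwarz on $\Omega$ is used. Nothing further is needed.
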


\begin{lemma}\label{inq}
	For suitable functions $f, g \in \mathcal{H}^k(\Omega_T)$ with some integer $k\geq 4$, it holds that for $|\alpha|+|\beta|\leq k$ and $t \in [0, T],$
	\begin{equation}
		\big\|(\partial^\alpha f\partial^\beta g)(t,\cdot) \big\|_{L^2(\Omega)}\leq C\|f(t)\|_{\mathcal{H}^k(\Omega)}\cdot\|g(t)\|_{\mathcal{H}^k(\Omega)}.\label{inq1}
	\end{equation}
	Then, it implies that for $|\alpha|\leq k$,
	\begin{align}
		\big\|\partial^\alpha(fg)(t,\cdot)\big\|_{L^2(\Omega)}
		\leq C \|f(t)\|_{\mathcal{H}^k(\Omega)}\cdot\|g(t)\|_{\mathcal{H}^k(\Omega)},
		\label{inq2}
	\end{align}
	and 
	\begin{align}
		&\big\|\big([\partial^\alpha, f]~g\big)(t,\cdot)\big\|_{L^2(\Omega)} \leq C \|f(t)\|_{\mathcal{H}^k(\Omega)}\cdot\|g(t)\|_{\mathcal{H}^{k-1}(\Omega)}.
		\label{inq3}
	\end{align}
\end{lemma}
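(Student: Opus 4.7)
The plan is to establish (\ref{inq1}) first, and then deduce (\ref{inq2}) and (\ref{inq3}) from it by the Leibniz formula. All three estimates are of standard Moser/Sobolev-algebra type, so the argument is an $L^\infty$--$L^2$ split where the low-derivative factor is controlled in $L^\infty$ via the anisotropic embedding of Lemma~\ref{sobolevtype}.

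For (\ref{inq1}), given multi-indices with $|\alpha|+|\beta|\leq k$, I would argue by symmetry and assume without loss of generality that $|\alpha|\leq|\beta|$, whence $|\alpha|\leq k/2$. Because $k\geq 4$, this gives $|\alpha|+2\leq k/2+2\leq k$. Applying Lemma~\ref{sobolevtype} to $\partial^\alpha f(t,\cdot)$ and bounding $\partial_x$, $\partial_y^2$ derivatives by the $\mathcal{H}^{|\alpha|+2}$ norm then yields
\[
\|\partial^\alpha f(t,\cdot)\|_{L^\infty(\Omega)}\leq C\,\|f(t)\|_{\mathcal{H}^{|\alpha|+2}(\Omega)}\leq C\,\|f(t)\|_{\mathcal{H}^k(\Omega)},
\]
after which Hölder's inequality gives $\|\partial^\alpha f\,\partial^\beta g\|_{L^2}\leq \|\partial^\alpha f\|_{L^\infty}\|\partial^\beta g\|_{L^2}\leq C\|f\|_{\mathcal{H}^k}\|g\|_{\mathcal{H}^k}$. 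This is the step where the condition $k\geq 4$ is essential: the anisotropic Sobolev inequality in Lemma~\ref{sobolevtype} costs one $\partial_x$ and two $\partial_y$, so one needs room for two extra normal derivatives on top of the low-frequency multi-index.

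For (\ref{inq2}), the Leibniz formula
\[
\partial^\alpha(fg)=\sum_{\beta\leq\alpha}\binom{\alpha}{\beta}\partial^\beta f\,\partial^{\alpha-\beta}g
\]
reduces the estimate directly to (\ref{inq1}) since $|\beta|+|\alpha-\beta|=|\alpha|\leq k$ in every summand. For the commutator (\ref{inq3}), I would write
\[
[\partial^\alpha,f]g=\partial^\alpha(fg)-f\,\partial^\alpha g=\sum_{0<\beta\leq\alpha}\binom{\alpha}{\beta}\partial^\beta f\,\partial^{\alpha-\beta}g,
\]
so that at least one derivative always falls on $f$ and the derivative on $g$ has order at most $|\alpha|-1\leq k-1$. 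I would again split into two cases: if $|\beta|\leq|\alpha-\beta|$ then $|\beta|\leq k/2$, so $\|\partial^\beta f\|_{L^\infty}\leq C\|f\|_{\mathcal{H}^k}$ by Lemma~\ref{sobolevtype} and $\|\partial^{\alpha-\beta}g\|_{L^2}\leq \|g\|_{\mathcal{H}^{k-1}}$. Otherwise $|\beta|>|\alpha-\beta|$ forces $|\alpha-\beta|\leq(|\alpha|-1)/2$, which combined with $|\alpha|\leq k$ and $k\geq 4$ gives $|\alpha-\beta|+2\leq k-1$; then $\|\partial^{\alpha-\beta}g\|_{L^\infty}\leq C\|g\|_{\mathcal{H}^{k-1}}$ and $\|\partial^\beta f\|_{L^2}\leq \|f\|_{\mathcal{H}^k}$. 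Summing over $\beta$ with the binomial coefficients concludes the proof.

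The only mildly delicate point is the bookkeeping in the commutator case, where one must verify that the reduced regularity $\mathcal{H}^{k-1}$ for $g$ is enough to carry the embedding whenever $g$ is the factor placed in $L^\infty$; this is precisely where the one-index gain coming from $|\beta|\geq 1$ in $[\partial^\alpha,f]g$ is used, and the hypothesis $k\geq 4$ ensures both case A and case B close without loss.
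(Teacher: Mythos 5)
Your proof is correct and follows exactly the standard Moser-type argument (an $L^\infty$--$L^2$ splitting of the lower-order factor via the anisotropic embedding of Lemma~\ref{sobolevtype}, then Leibniz for \eqref{inq2} and \eqref{inq3}); the paper itself omits the proof and refers to \cite{HLY2019,Masmoudi-Wong-2015,LXY2019}, where this is precisely the argument used. Your case bookkeeping in the commutator estimate, including the borderline instance $k=4$, checks out.
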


Finally, the notation $\left\langle a, b \right\rangle \triangleq \int_{\Omega} a(x, y)b(x,y) dx dy$ means the $L^2$ inner product of $a, b$ on $\Omega$ throughout this paper.  In addition, we also use the notation $[A, B]$ to denote the commutator between $A$ and $B$.

\section{Well-posedness of solutions}\label{s3}

In this section, we are devoted to  establishing the well-posedness of solutions $(\hat{ u}_{1}, \hat{w}_{1}, \hat{q}_{1})$ for  the initial-boundary value problem \eqref{hat}. For notational simplicity, we  replace the notation of $(\bar{t},\bar{x},\bar{y})$ by $(t, x, y)$ and    omit the ``hat" symbols for the system \eqref{hat} in this section. In addition,
to facilitate the analysis of the system \eqref{hat}, it is advantageous to replace the original variable $h_{1}$ with a new function to simplify subsequent derivations. Specifically, we define
\begin{align}
	{q}_{1}= {q}_{1}({t},{x},{y}):=\frac{1}{2}{h}_1^2({t},{x},{y}),
	\label{qh}
\end{align}
and introduce the following notations
\begin{align*}
	\mathcal{A}=\left( \frac{a}{P-\frac{1}{2}{h}^2_1}\right)^{\frac{1}{\gamma}}=\left( \frac{a}{P-q_{1}}\right)^{\frac{1}{\gamma}}, \qquad Q={ \gamma(P-\frac{1}{2} {h}_{1}^2)+{h}_{1}^2}
	={ \gamma(P-q_{1})+2q_{1}},
\end{align*}
then the system \eqref{hat} can be formulated as the following problem:
\begin{equation}
	\left\{
	\begin{aligned}
		&\partial_{ {t}}u_1+u_1\partial_{ {x}}u_1+\mathcal{A} P_{ {x}}-\mathcal{A} \partial_{ {x}} q_{1}+\left( \sigma-\mu\mathcal{A}\right) \partial_{ {y}} q_{1} \partial_{ {y}}u_{1} -2\mu\mathcal{A}
		q_{1} \partial_{ {y}}^2u_1=0,\\
		&\partial_{ {t}}w_1+u_1\partial_{ {x}}w_1+2\zeta\mathcal{A} \sqrt{2 q_{1}} \partial_{ {y}}u_{1}
		+\left( \sigma-\mu'\mathcal{A}\right)  \partial_{ {y}} q_{1} \partial_{ {y}}w_{1}-2\mu'\mathcal{A}
		q_{1} \partial_{ {y}}^2w_{1}
		=0,\\
		&\partial_{ {t}} q_{1}+u_1\partial_{ {x}} q_{1}
		-2 q_{1}\frac{P_{t}+P_{x}u_{1}}{ Q}
		-\frac{ 2\gamma(P- q_{1}) q_{1}}{ Q}\partial_{ {x}}u_1
		+
		\sigma (\partial_{ {y}} q_{1})^{2}
		-2\sigma\frac{ \gamma(P- q_{1}) q_{1}}{ Q}\partial_{y}^2  q_{1}=0,
		\\
		&(u_1, w_1, \partial_{ {y}}q_{1})|_{ {y}=0}=\mathbf{0}, \\ &\lim_{ {y}\rightarrow +\infty}(u_1, w_{1}, q_1)( {t}, {x}, {y})=(U,I,H^2/2)( {t}, {x}),\\
		&(u_{1},w_{1}, q_{1})|_{ {t}=0}=(u_{1,0},w_{1,0}, h_{1,0}^2/2)( {x}, {y}).
	\end{aligned}
	\label{nohat}
	\right.
\end{equation}
Define matrix ${\bf{A}}_{0}, {\bf{B}}_{0}$ as
\begin{align*}
	{\bf{A}}_{0}={\bf{A}}_{0}(u_{1}, w_{1}, q_{1})=\left(
	\begin{array}{ccc}
		u_{1} & 0 & -\mathcal{A}\\
		0 & u_{1} & 0\\
		-\frac{ 2\gamma(P-q_{1})q_{1}}{ Q}  & 0 & u_{1}
	\end{array}\right),
\end{align*}
and
\begin{align*}
	{\bf{B}}_{0}={\bf{B}}_{0}(u_{1}, w_{1}, q_{1})=2q_{1}\left(
	\begin{array}{ccc}
		\mu\mathcal{A} & 0 & 0\\
		0 & \mu'\mathcal{A} & 0\\
		0 & 0 & \sigma\frac{ \gamma(P-q_{1})}{ Q}
	\end{array}\right),
\end{align*}
then the first 
three equations in \eqref{nohat} can be written as follows
\begin{align}
	\begin{split}
		&\partial_{t}
		\left(
		\begin{array}{ccc}
			u_{1}\\
			w_{1} \\
			q_{1} 
		\end{array}
		\right)
		+{\bf{A}}_{0}\partial_{x}
		\left(
		\begin{array}{ccc}
			u_{1}\\
			w_{1} \\
			q_{1} 
		\end{array}
		\right)
		+
		\left(
		\begin{array}{ccc}
			\left( \sigma-\mu\mathcal{A}\right) \partial_{ {y}}q_{1} \partial_{ {y}}u_{1} \\
			2\zeta\mathcal{A} \sqrt{2q_{1}} \partial_{ {y}}u_{1}
			+\left( \sigma-\mu'\mathcal{A}\right)   \partial_{ {y}}q_{1} \partial_{ {y}}w_{1} \\
			\sigma (\partial_{ {y}}q_{1})^{2} 
		\end{array}
		\right)
		\\
		&\qquad \qquad \quad	+
		\left(
		\begin{array}{ccc}
			\mathcal{A}P_{ {x}}\\
			0 \\
			-2q_{1}\frac{P_{t}+P_{x}u_{1}}{ Q} 
		\end{array}
		\right)
		-
		{\bf{B}}_{0}\partial_{y}^{2}
		\left(
		\begin{array}{ccc}
			u_{1}\\
			w_{1} \\
			q_{1} 
		\end{array}
		\right)=0.
	\end{split}
	\label{matrix}
\end{align}
We introduce a positive symmetric matrix
\begin{align}
	{\bf{S}}={\bf{S}}(u_{1}, w_{1}, q_{1})
	=\left(
	\begin{array}{ccc}
		\mathcal{A}^{-1}(P-q_{1})q_{1} & 0 & 0\\
		0 & \mathcal{A}^{-1}(P-q_{1})q_{1}& 0\\
		0 & 0 & \frac{ Q}{ 2\gamma}
	\end{array}\right),
	\label{S}
\end{align}
such that
\begin{align}
	{\bf{A}}={\bf{A}}(u_{1}, w_{1}, q_{1})=	{\bf{SA}}_{0}
	=\left(
	\begin{array}{ccc}
		\mathcal{A}^{-1}(P-q_{1})q_{1}u_{1} & 0 & -(P-q_{1})q_{1}\\
		0 & \mathcal{A}^{-1}(P-q_{1})q_{1}u_{1} & 0\\
		-(P-q_{1})q_{1}  & 0 &\frac{ Q}{ 2\gamma} u_{1}
	\end{array}\right),
	\label{A}
\end{align}
is symmetric and 
\begin{align}
	{\bf{B}}={\bf{B}}(u_{1}, w_{1}, q_{1})=	{\bf{SB}}_{0}=\left(
	\begin{array}{ccc}
		2\mu (P-q_{1})q_{1}^2 & 0 & 0\\
		0 & 2\mu'(P-q_{1}) q_{1}^2 & 0\\
		0 & 0 & \sigma (P-q_{1})q_{1}
	\end{array}\right)
	\label{B}
\end{align}
is positively definite. Finally, the system \eqref{nohat} by the equation  \eqref{matrix} is converted to 
\begin{equation}
	\left\{
	\begin{aligned}
		& \mathcal{A}^{-1}(P-q_{1})q_{1} \partial_{ {t}}u_1
		+\left\lbrace \mathcal{A}^{-1}(P-q_{1})q_{1}\right\rbrace u_1\partial_{ {x}}u_1
		+ (P-q_{1})q_{1} P_{ {x}}
		- (P-q_{1})q_{1} \partial_{ {x}}q_{1}
		\\
		&\quad
		+ \left( \mathcal{A}^{-1}\sigma-\mu\right) (P-q_{1})q_{1}  \partial_{ {y}}q_{1} \partial_{ {y}}u_{1}
		 -2\mu (P-q_{1})q_{1}^2\partial_{ {y}}^2u_1=0,\\
		& \mathcal{A}^{-1}(P-q_{1})q_{1}\partial_{ {t}}w_1
		+\left\lbrace \mathcal{A}^{-1}(P-q_{1})q_{1}\right\rbrace u_1\partial_{ {x}}w_1+2\zeta (P-q_{1})q_{1} \sqrt{2q_{1}} \partial_{ {y}}u_{1}
		\\
		&\quad
		+\left( \mathcal{A}^{-1}\sigma-\mu'\right) (P-q_{1})q_{1}   \partial_{ {y}}q_{1} \partial_{ {y}}w_{1}-2\mu' (P-q_{1})q_{1}^2 \partial_{ {y}}^2w_{1}
		=0,\\
		&\frac{ Q}{ 2\gamma}\partial_{ {t}}q_{1}
		+\frac{ Q}{ 2\gamma}u_1\partial_{ {x}}q_{1}
		-q_{1}\frac{P_{t}+P_{x}u_{1}}{ \gamma}
		-(P-q_{1})q_{1}\partial_{ {x}}u_1
	+
		 \frac{ \sigma Q}{ 2\gamma}(\partial_{ {y}}q_{1})^{2}
		-\sigma(P-q_{1})q_{1}\partial_{y}^2 q_{1}=0,
		\\
		&(u_1, w_1, \partial_{ {y}}q_{1})|_{ {y}=0}=\mathbf{0}, \\ &\lim_{ {y}\rightarrow +\infty}(u_1, w_{1}, q_1)( {t}, {x}, {y})=(U,I,H^2/2)( {t}, {x}),\\
		&(u_{1},w_{1}, q_{1})|_{ {t}=0}=(u_{1,0},w_{1,0}, h_{1,0}^2/2)( {x}, {y}).
	\end{aligned}
	\right.
	\label{3m}
\end{equation}

Now, we state the main result in this section.

\begin{theorem}\label{local}
Let $k \geq 4$ be an integer, we suppose that the trace  $(U, I, H, P)(t, x) $  of the outflow satisfies
\begin{align}
\sup_{0\leq t\leq T}\sum_{j=0}^{2k}
\|\partial^j_t (U, I, H, P)(t, \cdot) \|_{H^{2k-j}(\mathbb{T}_{x})}\leq M_e,
\label{me}
\end{align}
for  some constants $M_{e} >1 $. Assume that the initial data $(u_{1,0},w_{1,0}, h_{1,0}^2/2)( {x}, {y})$ satisfy
\begin{align}
(u_{1,0},w_{1,0}, h_{1,0}^2/2)( {x}, {y})- (U,I,H^2/2)( 0, {x}) \in H^{3k}(\Omega),
\label{v10}
\end{align}
and the compatibility conditions up to the $k$-$th$
order for the initial-boundary problem \eqref{3m}. Moreover, we further assume that the constant $\delta$ is chosen small enough
such that 
\begin{align}
 2\delta \leq \frac{1}{2}\big(h_{1,0}(x,y)\big)^2 \leq P(0,x)-2\delta, 
\qquad \forall (x,y)\in \Omega.
	\label{assumptions0}
\end{align}
Then there exists a positive time $T_{*} \leq T$ such that the 
 problem \eqref{3m} admits a unique solution $(u_{1}, w_{1}, q_{1})$ satisfying
\begin{align}
	(u_{1},w_{1}, h_{1}^2/2)(t, {x}, {y})- (U,I,H^2/2)( t, {x}) \in \mathcal{H}^{k}(\Omega_{T_{*}}),
	\label{v1}
\end{align}
and 
\begin{align}
	2\delta \leq \frac{1}{2}\big(h_{1}(t, x,y)\big)^2 \leq P(t,x)-2\delta, 
	\qquad \forall (t, x,y)\in \Omega_{T_{*}}.
	\label{assumptions}
\end{align}
\end{theorem}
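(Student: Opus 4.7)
\medskip

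\noindent\textbf{Proof plan for Theorem \ref{local}.}

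The plan is to subtract the outflow and treat \eqref{3m} by a standard Picard-type iteration in Sobolev spaces, exploiting the symmetric-hyperbolic / positive-parabolic structure exhibited in \eqref{matrix}--\eqref{B}. First, I introduce the perturbation $\mathbf{v}_1=(u_1-U,\ w_1-I,\ q_1-H^2/2)$; assumption \eqref{me} on the outflow shows it suffices to find $\mathbf{v}_1\in\mathcal{H}^k(\Omega_{T_*})$. The macros $\sn,\an,\bn,\fn$ in the preamble already suggest the linearization I will use: given the previous iterate $\vnn=(u_1^{n-1},w_1^{n-1},q_1^{n-1})$, the next iterate $\vn$ is defined as the solution of
\begin{equation*}
\sn\partial_{t}\vn+\an\partial_{x}\vn-\bn\partial_{y}^{2}\vn = \fn,
\end{equation*}
with the boundary conditions $(u_{1}^{n},w_{1}^{n},\partial_y q_{1}^{n})|_{y=0}=\mathbf{0}$, $\lim_{y\to\infty}\vn=(U,I,H^{2}/2)$, and the given initial data. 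Since $\sn,\bn$ are positive-definite and diagonal (thanks to the symmetrizer \eqref{S}) when $q_1^{n-1}\in(2\delta,P-2\delta)$, this is a decoupled linear parabolic problem for each of $u_1^n,w_1^n,q_1^n$ with bounded smooth coefficients, and classical theory (together with the order-$k$ compatibility conditions assumed at $t=0$) yields a unique solution in $\mathcal{H}^{k}$ at each step.

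The heart of the proof is the uniform a priori estimate. I differentiate the linear system by $\partial^{\alpha}$ for every $|\alpha|\leq k$, pair with $\partial^{\alpha}\vn$ in $L^{2}(\Omega)$, and use the symmetry of $\mathbf{A}(\vnn)$ and the positivity of $\mathbf{S}(\vnn),\mathbf{B}(\vnn)$ to get
\begin{equation*}
\frac{d}{dt}\bigl\langle \sn\partial^{\alpha}\vn,\partial^{\alpha}\vn\bigr\rangle + 2\bigl\langle \bn\partial_{y}\partial^{\alpha}\vn,\partial_{y}\partial^{\alpha}\vn\bigr\rangle \leq C\bigl(\|\vnn\|_{\mathcal{H}^{k}}\bigr)\bigl(1+\|\vn\|_{\mathcal{H}^{k}}^{2}\bigr) + \text{(boundary terms)}.
\end{equation*}
The commutators $[\partial^{\alpha},\sn]\partial_{t}\vn$, $[\partial^{\alpha},\an]\partial_{x}\vn$, $[\partial^{\alpha},\bn]\partial_{y}^{2}\vn$ and the nonlinear source $\fn$ are controlled by Lemma \ref{inq}; the only delicate term is $[\partial^{\alpha},\bn]\partial_{y}^{2}\vn$, where one of the derivatives on $\vn$ must be absorbed into the parabolic dissipation, since $\bn$ itself depends on $\vnn$. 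The boundary contributions from integration by parts in $y$ are of the form $\langle \mathbf{B}\partial_y\partial^{\alpha}\vn, \partial^{\alpha}\vn\rangle|_{y=0}$; they vanish for the $u_1$ and $w_1$ components (Dirichlet boundary), and for the $q_1$ component I invoke the Neumann condition $\partial_y q_1|_{y=0}=0$ together with Lemma \ref{trace} to absorb the remaining traces produced by tangential derivatives. From this estimate, a standard induction on $n$ using a local-in-time continuity argument shows that on some interval $[0,T_*]$ one has $\|\vn\|_{\mathcal{H}^{k}(\Omega_{T_*})}\leq M$ and $2\delta\leq q_1^n(t,x,y)\leq P(t,x)-2\delta$ uniformly in $n$, provided $T_*$ is chosen small enough depending on $M_e,\delta$, and the initial data.

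To pass to the limit I show $\{\vn\}$ is Cauchy in a weaker norm: subtracting the equations for $\vn$ and $\mathbf{v}^{n+1}$ produces a linear system for the difference with right-hand side linear in $(\vn-\vnn)$, and a basic $L^{2}$ (or $\mathcal{H}^{k-2}$) energy estimate of the same flavor as above yields
\begin{equation*}
\sup_{[0,T_*]}\|\mathbf{v}^{n+1}-\vn\|_{L^{2}}^{2}+\int_{0}^{T_*}\|\partial_y(\mathbf{v}^{n+1}-\vn)\|_{L^{2}}^{2}\,dt\leq \tfrac{1}{2}\bigl(\sup_{[0,T_*]}\|\vn-\vnn\|_{L^{2}}^{2}+\int\|\partial_y(\vn-\vnn)\|_{L^{2}}^{2}\bigr),
\end{equation*}
possibly after shrinking $T_*$. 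By interpolation with the uniform $\mathcal{H}^{k}$ bound, the limit $\mathbf{v}_1\in\mathcal{H}^{k}(\Omega_{T_*})$ exists and solves \eqref{3m}, and the pointwise bounds on $q_1$ persist by continuity. Uniqueness is obtained by the same difference estimate applied to two solutions sharing the same data.

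The main obstacle I anticipate is the top-order energy estimate at $|\alpha|=k$, specifically handling the commutator $[\partial^{\alpha},\bn]\partial_{y}^{2}\vn$ without losing a $y$-derivative, and simultaneously controlling the boundary trace from the $q_1$-component where only $\partial_y q_1|_{y=0}=0$ is available. Both are overcome by exploiting that the matrix $\bn$ has a factor $q_1^{n-1}$ (so that $\partial_y$-derivatives can be redistributed using integration by parts together with Lemma \ref{trace}) and that the symmetrizer $\sn$ makes the leading-order trace at $y=0$ vanish identically. Everything else—propagation of positivity $q_1\in(2\delta,P-2\delta)$, preservation of the compatibility and of the Neumann condition on $q_1$ through the iteration—is then a continuity-in-$t$ argument available once the uniform $\mathcal{H}^{k}$ bound is in hand.
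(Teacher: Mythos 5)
Your high-level framework (subtracting the outflow, Picard iteration on the symmetrized system, uniform $\mathcal{H}^{k}$ estimates using the positive matrices $\mathbf{S}$ and $\mathbf{B}$, $L^{2}$ contraction, and interpolation) is the same as the paper's. However there are two concrete gaps in your plan that the paper has to resolve and that you have not.

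\emph{The subtraction must respect the boundary.} You set $u_{1}-U$ as the first component of the perturbation. Since $u_{1}|_{y=0}=0$ but $U(t,x)\neq 0$ in general, this perturbation does \emph{not} satisfy the Dirichlet condition at $y=0$, so the entire boundary analysis breaks before it starts. The paper instead subtracts $U(t,x)\phi(y)$, where $\phi$ is a smooth cutoff with $\phi\equiv 0$ for $y\leq 1$ and $\phi\equiv 1$ for $y\geq 2$; then $u:=u_{1}-U\phi$ satisfies both $u|_{y=0}=0$ and $u\to 0$ as $y\to\infty$. This is not a cosmetic change: it is what allows the boundary term at $y=0$ to be analyzed at all.

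\emph{The boundary term does not vanish when $\partial^{\alpha}$ contains $\partial_{y}$, and your proposed cure would lose a derivative.} You assert the boundary contribution $\int_{\mathbb{T}}\partial^{\alpha}\mathbf{v}^{n}\,\mathbf{B}\,\partial_{y}\partial^{\alpha}\mathbf{v}^{n}\big|_{y=0}\,dx$ vanishes for the $u$ and $w$ components by the Dirichlet condition. That only holds when $\partial^{\alpha}=\partial_{t}^{\alpha_{1}}\partial_{x}^{\alpha_{2}}$ is purely tangential, since $u|_{y=0}=0$ controls only tangential derivatives of $u$ on $\{y=0\}$. When $\partial^{\alpha}=\partial^{\beta}\partial_{y}$ (which occurs for $|\alpha|\leq k$ whenever $\alpha_{3}\geq 1$), the traces $\partial^{\beta}\partial_{y}u^{n}|_{y=0}$ and $\partial^{\beta}\partial_{y}^{2}u^{n}|_{y=0}$ are generically nonzero, and a direct application of Lemma \ref{trace} to the term $\int_{\mathbb{T}}\partial^{\beta}\partial_{y}\mathbf{v}^{n}\,\mathbf{B}\,\partial^{\beta}\partial_{y}^{2}\mathbf{v}^{n}\big|_{y=0}\,dx$ produces $\|\partial^{\beta}\partial_{y}^{3}\mathbf{v}^{n}\|_{L^{2}(\Omega)}$, i.e.\ $|\beta|+3=k+2$ derivatives, one more than either $\|\mathbf{v}^{n}\|_{\mathcal{H}^{k}}$ or the dissipation $\|\partial_{y}\mathbf{v}^{n}\|_{\mathcal{H}^{k}}$ can absorb; the estimate closes on itself and fails. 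The paper's essential device (its estimate of $K_{4}$, in particular the split into $I_{1},\dots,I_{6}$) is to substitute the linearized equation itself for $\mathbf{B}\,\partial^{\beta}\partial_{y}^{2}\mathbf{v}^{n}$ at $y=0$, trading the high normal derivative for terms of the form $\partial^{\beta}\big(\mathbf{S}\partial_{t}\mathbf{v}^{n}\big)$, $\partial^{\beta}\big(\mathbf{A}\partial_{x}\mathbf{v}^{n}\big)$, $\partial^{\beta}\big(\mathbf{F}\partial_{y}\mathbf{v}_{1}^{n}\big)$ and commutators, all of which are at most $k$-th order and can be handled by Lemma \ref{trace} and Lemma \ref{inq} together with the dissipation. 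Without this re-expression your a priori estimate does not close.

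The rest of your plan — positivity propagation for $q_{1}^{n}$, the $L^{2}$ contraction with a right-hand side linear in $(\mathbf{v}^{n}-\mathbf{v}^{n-1})$, and passage to the limit by interpolation — matches the paper and is sound once the two points above are fixed.
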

\begin{remark}\label{rm}
Recalling the definition of matrix ${\bf{S}}$ given in \eqref{S} and the relation \eqref{qh}, it is easy to see that the Theorem \ref{local} is also true for  $(u_{1}, w_1, q_1)(t,x, y)$ of the problem \eqref{nohat} and $(\hat{u}_{1}, \hat{w}_1, \hat{h}_1)(\bar{t}, \bar{x}, \bar{y})$ of the problem  \eqref{hat}.
\end{remark}

Now, we use the classical Picard iteration scheme  to prove Theorem \ref{local}. To overcome the difficulty
originated from the boundary term, we introduce a smooth function $\phi(y)$ satisfying $0 \leq \phi(y) \leq 1$ and  
\begin{align*}
	\phi(y) =
	\begin{cases} 
		0, & 0 \leq y \leq 1, \\
		1, & y \geq 2.
	\end{cases}
\end{align*}
Write 
\begin{align*}
	u_{1}=u+U(t,x)\phi(y), \qquad w_{1}=w+I(t,x), \qquad  q_{1}=q+H^{2}(t,x)/2,
\end{align*}
then $(u, w, q)$ solves the following  equations from Equ. $\eqref{3m}_{1}-\eqref{3m}_{3}$:
\begin{equation}
	\left\{
	\begin{aligned}
		&	\mathcal{A}^{-1}(P-q_{1})q_{1}\partial_{ {t}}u
		+\left\lbrace 	\mathcal{A}^{-1}(P-q_{1})q_{1}\right\rbrace u_{1}\partial_{ {x}}u+
		\left\lbrace 	\mathcal{A}^{-1}(P-q_{1})q_{1}\right\rbrace uU_{x}\phi+
			(P-q_{1})q_{1} P_{ {x}}-(P-q_{1})q_{1}\partial_{ {x}} q
		\\
		&\qquad
		+\left( \sigma\mathcal{A}^{-1}-\mu\right) (P-q_{1})q_{1} \partial_{ {y}} q \partial_{ {y}}(u+U\phi)
		 -2\mu(P-q_{1})q_{1}^2
	 \partial_{ {y}}^2u
		-2\mu(P-q_{1})q_{1}^2
		 U\phi''\\
		&\quad=-	\mathcal{A}^{-1}(P-q_{1})q_{1}U_{t}\phi-	\mathcal{A}^{-1}(P-q_{1})q_{1}UU_{x}\phi^{2}
		+(P-q_{1})q_{1}HH_{x},\\
		&\mathcal{A}^{-1}(P-q_{1})q_{1}\partial_{ {t}}w
		+\left\lbrace \mathcal{A}^{-1}(P-q_{1})q_{1}\right\rbrace u_{1}\partial_{ {x}}w
		+\left\lbrace \mathcal{A}^{-1}(P-q_{1})q_{1}\right\rbrace uI_{x}
		+2\zeta (P-q_{1})q_{1} \sqrt{2q_{1}} \partial_{ {y}}u
		\\
		&\qquad
		+2\zeta(P-q_{1})q_{1} \sqrt{2q_{1}} U\phi'
		+\left(\sigma\mathcal{A}^{-1}-\mu'\right) (P-q_{1})q_{1} \partial_{ {y}} q \partial_{ {y}}w-2\mu'(P-q_{1})q_{1}^2
		 \partial_{ {y}}^2w\\
		&\quad=-\mathcal{A}^{-1}(P-q_{1})q_{1}I_{t}-\mathcal{A}^{-1}(P-q_{1})q_{1}UI_{x}\phi \\
		&\frac{ Q}{ 2\gamma}\partial_{ {t}} q
		+\frac{ Q}{ 2\gamma}u_{1}\partial_{ {x}} q
		+\frac{ Q}{ 2\gamma}uHH_{x}
		-q_{1}\frac{P_{t}+P_{x}(u+U\phi)}{\gamma}
		-(P-q_{1})q_{1}\partial_{ {x}}u
		-(P-q_{1})q_{1}U_{x}\phi
		\\
		&\qquad
		+
		\sigma\frac{ Q}{ 2\gamma}(\partial_{ {y}} q)^{2}
		-\sigma(P-q_{1})q_{1}\partial_{y}^2  q
		\\
		&
		\quad=-\frac{ Q}{ 2\gamma}HH_{t}	-		\frac{ Q}{ 2\gamma}UHH_{x}\phi.
	\end{aligned}
	\right.
\end{equation}
The corresponding 
boundary conditions  and the initial data in the system \eqref{3m} become
\begin{equation}
	\left\{
	\begin{aligned}
		&(u, w, \partial_{ {y}}q)|_{ {y}=0}=\mathbf{0}, \\ &\lim_{ {y}\rightarrow +\infty}(u_1, w_{1}, q_1)( {t}, {x}, {y})=\mathbf{0},\\
		&(u, w, q)|_{ {t}=0}=(u_{1,0},w_{1,0}, h_{1,0}^2/2)( {x}, {y})-
		(U(0,x)\phi(y), I(0,x), H^{2}(0,x)/2).
	\end{aligned}
	\right.
\end{equation}

Let ${\bf{v}}={\bf{v}}(t,x, y)=: (u, w, q)^{T}(t, x, y)$ and ${\bf{v}}_{1}(t,x, y)={\bf{v}}+(U(t,x)\phi(y), I(t,x), H^{2}(t,x)/2)^{T}$,
we obtain ${\bf{v}}$ by solving the following
linear initial-boundary value problem
\begin{equation}\label{vvv}
	\left\{
	\begin{aligned}
		&
			{\bf{S}}({\bf{v}}_{1})\partial_{t}{\bf{v}}+{\bf{A}}({\bf{v}}_{1})\partial_{x}{\bf{v}}+f({\bf{v}}_{1}, \partial_{y}{\bf{v}}_{1})
		-{\bf{B}}({\bf{v}}_{1})\partial_{y}^{2}{\bf{v}}
		=g({\bf{v}}_{1})\\
		&(u, w, \partial_{ {y}}q)|_{ {y}=0}=\mathbf{0}, \\ &\lim_{ {y}\rightarrow +\infty}{\bf{v}}( {t}, {x}, {y})=\mathbf{0},\\
		&{\bf{v}}|_{ {t}=0}=(u_{1,0},w_{1,0}, h_{1,0}^2/2)^{T}( {x}, {y})-
		(U(0,x)\phi(y), I(0,x), H^{2}(0,x)/2)^{T},
	\end{aligned}
	\right.
\end{equation}
with the  matrices
\begin{align}
\begin{split}
	f({\bf{v}}_{1}, \partial_{y}{\bf{v}}_{1})
	&=\left(
	\begin{array}{ccc}
		\left( \sigma\mathcal{A}^{-1}-\mu\right) (P-q_{1})q_{1}\partial_{ {y}} q \partial_{ {y}}u_{1} \\
		2\zeta(P-q_{1})q_{1} \sqrt{2q_{1}} \partial_{ {y}}u_{1}
		+\left(  \sigma\mathcal{A}-^{-1}\mu'\right)(P-q_{1})q_{1}  \partial_{ {y}} q \partial_{ {y}}w \\
		\sigma\frac{ Q}{ 2\gamma}(\partial_{ {y}} q)^{2}
	\end{array}\right)\\
	&	=\left(
	\begin{array}{ccc}
		\left( \sigma\mathcal{A}^{-1}-\mu\right)(P-q_{1})q_{1} \partial_{ {y}} q & 0 & 0 \\
		2\zeta(P-q_{1})q_{1} \sqrt{2q_{1}} & 
		\left(  \sigma\mathcal{A}^{-1}-\mu'\right) (P-q_{1})q_{1}  \partial_{ {y}} q & 0 \\
		0 & 0 & \sigma\frac{ Q}{ 2\gamma}\partial_{ {y}} q
	\end{array}\right)\partial_{y}{\bf{v}}_{1}
	\\
	&={\bf{F}}({\bf{v}}_{1}, \partial_{y}{\bf{v}}_{1})\partial_{y}{\bf{v}}_{1},
	\end{split}
\label{f}
\end{align}
and
\begin{align}
	g({\bf{v}}_{1})
	=\left(
	\begin{array}{ccc}
	(P-q_{1})q_{1}\left( 	-\mathcal{A}^{-1}uU_{x}\phi- P_{ {x}}+HH_{x}-2\mu
		\left(q+\frac{H^2}{2}\right) U\phi''
		-\mathcal{A}^{-1}U_{t}\phi-\mathcal{A}^{-1}UU_{x}\phi^{2} \right) \\
		-\mathcal{A}^{-1}	(P-q_{1})q_{1}\left( uI_{x}-I_{t}-UI_{x}\phi \right) \\
		-\frac{ Q}{ 2\gamma}uHH_{x}
		+q_{1}\frac{P_{t}+P_{x}(u+U\phi)}{\gamma}
		+	(P-q_{1})q_{1}U_{x}\phi
		-\frac{ Q}{ 2\gamma}HH_{t}-\frac{ Q}{ 2\gamma}UHH_{x}\phi
	\end{array}\right),
\label{g}
\end{align}
where the other matrices ${\bf{S}}(\cdot)$, ${\bf{A}}(\cdot)$ and ${\bf{B}}(\cdot)$ are defined in \eqref{S}, \eqref{A} and \eqref{B}, respectively.

Next, set
\begin{align}\label{m11}
{\bf{v}}_{1,0}^{j}(x, y)=	(u_{1,0}^{j}, w_{1,0}^j , b_{1,0}^j )^{T}(x,y)=(\partial_t^j u_{1}, \partial_t^j w_{1}, \partial_t^j b_{1})^{T}(0,x,y),\quad 0\leq j\leq k.
\end{align}
Under the assumptions in Theorem \ref{local}, it follows that ${\bf{v}}_{1,0}^{j}(x, y)\in {H}^m(\Omega)$, which can be derived from initial-boundary value problem \eqref{vvv} by induction with respect to $j$.  Moreover,  we also obtain 
\begin{align}\label{m12}
	\sup_{0\leq t \leq T}\sum_{\alpha_1+\alpha_2\leq 2k}\left\|\partial^\alpha (U\phi, I, H^{2}/2)(t,\cdot)\right\|_{L^{2}(\mathbb{T}_x, L^\infty(\mathbb{R}_y^+))}
		\leq M_1,
\end{align}
which implies that there exists a positive constant $M_1>1$, depending only on $M_e$ and $(u_{1,0},w_{1,0}, h_{1,0}^2/2)^{T}( {x}, {y})-
(U(0,x)\phi(y), I(0,x), H^{2}(0,x)/2)^{T}$, such that
\begin{align}\label{m13}
	\sum_{j=0}^{k}\big\|{\bf{v}}_{1,0}^{j}(x, y)-\partial_{t}^{j}(U(0,x)\phi(y), I(0,x), H^{2}(0,x)/2)^{T}\big\|_{H^{3k-2j}(\Omega)}\leq M_1.
\end{align}

\subsection{Construction of approximate solution sequence}\label{s3.1}
In this subsection, let us construct a sequence of approximate solution $\{\vn\}_{n \geq 0}=\{(u^{n}, w^{n}, q^{n})\}_{n \geq 0}$ through the following procedure.

We begin by constructing the zero-th approximate solution ${\bf{v}}^0(t, x, y) = (u^0, w^0, q^0)^\top(t, x, y)$ for problem  \eqref{vvv}. This approximation needs to  satisfy:
\begin{enumerate}
	\item
	boundary conditions:
	\begin{equation}\label{eq:bc}
		(u^0, w^0, \partial_y q^0)|_{y=0} = \mathbf{0},  \quad 
		\lim_{y \to +\infty} {\bf{v}}^0(t, x, y) = 0;
	\end{equation}
	\item  compatibility conditions up to the $k$-th order:
	\begin{equation}\label{eq:compatibility}
		\partial_t^j {\bf{v}}^0|_{t=0} = {\bf{v}}_{0}^j(x, y), \quad 0 \leq j \leq k.
	\end{equation}
\end{enumerate}
Such an approximation can indeed be constructed, we define ${\bf{v}}^0$ explicitly as:
\begin{align}
	{\bf{v}}^0(t, x, y) :
=(u_{1}^0,w_{1}^0, (h_{1}^{0})^2/2)^{T}(t, {x}, {y})-
(U(t,x)\phi(y), I(t,x), H^{2}(t,x)/2)^{T}
	= \sum_{j=0}^k \frac{t^j}{j!}{\bf{v}}_{0}^{j}(x, y),
\end{align}
where ${\bf{v}}_{0}^{j}(x, y)={\bf{v}}_{1,0}^{j}(x, y)-\partial_{t}^{j}(U(0,x)\phi(y), I(0,x), H^{2}(0,x)/2)^{T}$.
It is straightforward to check, by \eqref{m13},  that
\begin{align}
	{\bf{v}}^0(t, x, y) \in H^{k}({\Omega_{T}}).
\end{align}
Moreover, by an argument similar to Proposition 2.1  in \cite{HLY2019}, we can deduce that there exists a  time $0 <T_{0} \leq T$, such that
\begin{align}\label{0intial}
\delta\leq	q^{0}(t, x, y)+H^2/2(t,x) \leq P(t, x)-\delta,\quad \forall\ (t,x,y)\in\Omega_{T_0},
\end{align}
with $\delta>0$ given in  Theorem \ref{local}.

Next, we construct the $n$-th order  approximate solution by induction. Precisely,
for the problem \eqref{vvv} in the region $\Omega_{T_1},\ T_1\in(0,T_0]$  with $T_1$ to be determined later, suppose that
we have been constructed the $(n-1)$-th order approximate solution ${\bf{v}}^{n-1}(t, x, y)=(u^{n-1}, w^{n-1}, q^{n-1})^{T}(t, x, y)$, which satisfies
\begin{align}
	{\bf{v}}^{n-1}(t, x, y)\in \mathcal{H}^k(\Omega_{T_1}),\quad k\geq 4,
\label{3.1}
\end{align}
and  the following induction hypotheses
\begin{equation}
	\left\{\begin{aligned}
		&\partial_{t}^{j} {\bf{v}}^{n-1}\big|_{y=0}={\bf{v}}_{0}^{j}(x, y), \qquad 0\leq j \leq k,\\
	&(u^{n-1}, w^{n-1}, \partial_{y}q^{n-1})\big|_{y=0}=\mathbf{0}, \\
	&\lim_{ {y}\rightarrow +\infty}{\bf{v}}^{n-1}( {t}, {x}, {y})=\mathbf{0}, \\
	&\delta \leq q^{n-1}(t,x,y)+ H^{2}(t,x)/2\leq P(t,x)-\delta.
 	\end{aligned}
	\right.
\label{3.2}
\end{equation}
Finally, we are going to construct the $n$-th order approximate solution
\begin{align*}
{\bf{v}}^{n}={\bf{v}}^{n}(t, x, y)=(u^n, w^n, q^n)^{T}(t, x, y)
\end{align*}
 to the problem \eqref{vvv} that satisfies the corresponding conditions to \eqref{3.1}- \eqref{3.2}. Actually, we construct 
 ${\bf{v}}^{n}=(u^n, w^n, q^n)^{T}$ 
 by solving the following linear initial-boundary value problem in $\Omega_{T_1}$,
\begin{equation}\label{3.00}
	\left\{
	\begin{aligned}
		&{\bf{S}}({\bf{v}}^{n-1}_{1})\partial_{t}{\bf{v}}^{n}
		+\an\partial_{x}\vn+\fn\partial_{y}\vn_{1}
		-\bn\partial_{y}^{2}\vn=g(\vnn_{1}),\\
		&(u^{n}, w^{n}, \partial_{y}q^{n})\big|_{y=0}=\mathbf{0},
		\qquad 
		\lim_{ {y}\rightarrow +\infty}{\bf{v}}^{n}( {t}, {x}, {y})=\mathbf{0},\\
		&{\bf{v}}^{n}|_{ {t}=0}=(u_{1,0},w_{1,0}, h_{1,0}^2/2)^{T}( {x}, {y})-
		(U(0,x)\phi(y), I(0,x), H^{2}(0,x)/2),
	\end{aligned}
	\right.
\end{equation}
where ${\bf{S}}(\cdot)$, ${\bf{A}}(\cdot)$, ${\bf{B}}(\cdot)$, ${\bf{F}}(\cdot)$ and $g(\cdot)$ are defined in \eqref{S}, \eqref{A}, \eqref{B}, \eqref{f} and \eqref{g}, respectively.

\subsection{Uniform estimates of $\vn$}
The objective of this subsection is to derive  the uniform energy estimates of approximate solution $\vn$ for the system \eqref{3.00}.
Indeed, we will prove the solvability of the above problem \eqref{3.00}.
\begin{lemma}\label{solvability}
Under the assumptions of Theorem \ref{local}, the problem \eqref{3.00} has a unique classical solution $ \vn(t,x,y) \in \mathcal{H}^k(\Omega_{T_1})$  satisfying 
	\begin{align}
		\partial^j_t  \vn|_{t=0}= {\bf{v}}_0^j(x,y), \qquad 0\leq j\leq k.
	\label{initial}
	\end{align}
Moreover, there exists a positive constant $C_0$,  depending only on $T_0, M_e, \delta, \gamma$ 
such that	the following estimate holds for $t \in [0, T_{1}]$:
\begin{align}
	&\left\|\vn\right\|_{\mathcal{H}^k(\Omega_{t})}^2
	+\int_{0}^{t}\left\| \partial_{y}\vn\right\|_{\mathcal{H}^{k}(\Omega)}^{2} d\tau
	\leq
	C_{0}M_{1}^{3}\exp \left( C_{0}\int_{0}^{t}\left\| \vnn\right\|_{\mathcal{H}^k(\Omega)}^{10} ~d\tau
	\right),
\label{ginq}
\end{align}
	where the constant $M_1\geq 1$ is given in \eqref{m13}.
\end{lemma}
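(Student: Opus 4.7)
The proof divides into existence-uniqueness of $\vn$, verification of the compatibility identities \eqref{initial}, and the energy estimate \eqref{ginq}. The starting observation is that, with $\vnn$ frozen and satisfying the induction hypothesis $\delta\leq q^{n-1}+H^2/2\leq P-\delta$, the matrices $\sn$ and $\bn$ from definitions \eqref{S}, \eqref{B} are symmetric and uniformly positive definite on $\Omega_{T_1}$ with bounds depending only on $\delta,\gamma,a,M_e$. Since $\vnn\in\mathcal{H}^k(\Omega_{T_1})$ with $k\geq 4$, the Sobolev embedding of Lemma \ref{sobolevtype} gives $L^\infty$ control of $\vnn$ and $\partial_y\vnn$, so all entries of $\sn,\an,\bn,\fn$ and the source $g(\vnn_1)$ are smooth bounded coefficients. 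The system \eqref{3.00} is therefore a linear parabolic system with Dirichlet conditions on the first two components of $\vn$ and a Neumann condition on the third; classical theory (e.g.\ Galerkin in the tangential directions combined with an energy argument in $y$) yields a unique classical solution $\vn\in\mathcal{H}^k(\Omega_{T_1})$. The compatibility relations \eqref{initial} then follow because the recursive definition of ${\bf{v}}_0^j$ via \eqref{m11} coincides at $t=0$ with the time-derivative hierarchy induced by \eqref{3.00}, the two matching because $\vnn$ shares the same initial traces by the induction step.

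For the bound \eqref{ginq}, the plan is to apply $\partial^\alpha$ with $|\alpha|\leq k$ to the first line of \eqref{3.00} and take the $L^2(\Omega)$ inner product with $\partial^\alpha\vn$. Symmetry of $\an$ together with $x$-periodicity, and integration by parts in $y$ against the dissipative term, yield
\begin{align*}
\frac{1}{2}\frac{d}{dt}\langle\sn\partial^\alpha\vn,\partial^\alpha\vn\rangle + \langle\bn\partial_y\partial^\alpha\vn,\partial_y\partial^\alpha\vn\rangle = \mathcal{R}_\alpha + \mathcal{B}_\alpha,
\end{align*}
where $\mathcal{R}_\alpha$ is the interior remainder (commutators $[\partial^\alpha,\sn]\partial_t\vn$, $[\partial^\alpha,\an]\partial_x\vn$, $[\partial^\alpha,\bn]\partial_y^2\vn$, the contributions of $\partial_t\sn$, $\partial_x\an$, $\partial_y\bn$, the source $\partial^\alpha g(\vnn_1)$ and the inhomogeneous term $\partial^\alpha(\fn\partial_y\vnn_1)$), while $\mathcal{B}_\alpha$ collects the boundary terms at $y=0$. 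Each interior contribution is controlled by Lemma \ref{inq}; since $\mathcal{A}$, $Q$ and $(P-q_1)$ are smooth functions of $q_1$ on the interval $[\delta,P-\delta]$, the associated composition estimates produce polynomial bounds of the form $\mathcal{R}_\alpha\leq C(1+\|\vnn\|_{\mathcal{H}^k}^{p})\|\vn\|_{\mathcal{H}^k}^2+CM_1^3(1+\|\vnn\|_{\mathcal{H}^k}^{p})$ for some moderate integer $p$.

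The boundary term $\mathcal{B}_\alpha$ requires care. When $\alpha_3=0$ the tangential derivatives $\partial^\alpha u^n$ and $\partial^\alpha w^n$ vanish at $y=0$ by the Dirichlet conditions, while $\partial_y q^n|_{y=0}=0$ kills the third component, so $\mathcal{B}_\alpha=0$. When $\alpha_3\geq 1$ no such cancellation is immediate, but Lemma \ref{trace} bounds each trace by a product of $L^2$ and $H^1$ norms in $y$, and the resulting contributions are absorbed by the dissipation $\langle\bn\partial_y\partial^\alpha\vn,\partial_y\partial^\alpha\vn\rangle$ via Young's inequality, at the price of an additional polynomial factor of $\|\vnn\|_{\mathcal{H}^k}$. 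Summing over $|\alpha|\leq k$, using the uniform lower bound on $\sn$ to recover $\|\vn\|_{\mathcal{H}^k}^2$ on the left, and applying Gronwall's inequality yields \eqref{ginq}; the exponent $10$ in the Gronwall factor arises because the Cauchy-Schwarz step squares the polynomial prefactor produced by the Moser-type composition estimates applied to $\mathcal{A}^{1/\gamma}$, $Q$, and $\sqrt{2q_1}$ inside $\fn$ and $g(\vnn_1)$.

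The principal obstacle I anticipate is the analysis of $\mathcal{B}_\alpha$ when $\alpha_3\geq 1$: one must not lose any regularity in $y$, and the Neumann-type condition on $q^n$ has to be tracked carefully when $\partial_y$ is the outermost derivative. A secondary difficulty is the precise book-keeping of the polynomial degree in $\|\vnn\|_{\mathcal{H}^k}$ required to land exactly on the power $10$ stated in \eqref{ginq} rather than a larger one.
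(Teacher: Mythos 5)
Your overall framework (symmetrize via $\sn$, apply $\partial^\alpha$, integrate by parts, split boundary/interior, close with Gronwall) is the same as the paper's, and your treatment of the interior commutators and the $\alpha_3=0$ boundary cancellation via the Dirichlet/Neumann conditions and the diagonal structure of $\bn$ both match. But the central step — controlling the boundary contribution from the dissipative term when $\alpha_3\geq1$ — is where your argument fails, and it is exactly the place where the paper's proof has its one real idea.

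Write $\alpha=\beta\cdot\partial_y$ with $|\beta|\le k-1$. The boundary term produced by integrating $K_4=-\langle\partial^\alpha\vn,\bn\partial^\alpha\partial_y^2\vn\rangle$ by parts is
\begin{equation*}
\int_{\mathbb{T}}\partial^\beta\partial_y\vn\,\bn\,\partial^\beta\partial_y^2\vn\big|_{y=0}\,dx.
\end{equation*}
Your proposal is to bound this via Lemma~\ref{trace} and then absorb it into $c_\delta\|\partial^\alpha\partial_y\vn\|_{L^2}^2$ by Young's inequality. Neither of the two natural pairings works. Taking $f=\partial^\alpha\vn=\partial^\beta\partial_y\vn$, $g=\bn\partial^\alpha\partial_y\vn$ gives a leading term $\|\partial^\alpha\partial_y\vn\|_{L^2}\,\|\bn\partial^\alpha\partial_y\vn\|_{L^2}$, which is of exactly the same size as the dissipation itself (with an $O(1)$, not small, prefactor $\|\bn\|_{L^\infty}$), so Young's inequality does not absorb it. Taking $f=\bn\partial^\alpha\partial_y\vn$, $g=\partial^\alpha\vn$ (or any variant) produces $\|\partial_y(\bn\partial^\alpha\partial_y\vn)\|_{L^2}$, hence $\|\partial^\beta\partial_y^3\vn\|_{L^2}$; for $|\beta|=k-1$ this is a $(k{+}1)$-st order derivative of $\partial_y\vn$ and is \emph{not} controlled by $\|\partial_y\vn\|_{\mathcal{H}^k}$. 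So the ``trace lemma + Young'' route you describe in the $\alpha_3\ge1$ case genuinely loses a $y$-derivative (the very concern you flag at the end), and the proposal as written does not close.

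The paper resolves this by \emph{rewriting the boundary integrand using the PDE itself}. Since $\bn\partial_y^2\vn=\sn\partial_t\vn+\an\partial_x\vn+\fn\partial_y\vnn_1-g(\vnn_1)$, one substitutes
\begin{equation*}
\bn\,\partial^\beta\partial_y^2\vn=\partial^\beta\bigl(\sn\partial_t\vn+\an\partial_x\vn+\fn\partial_y\vnn_1-g(\vnn_1)\bigr)-[\partial^\beta,\bn]\partial_y^2\vn
\end{equation*}
before taking the trace. Each of the resulting pieces $I_1,\dots,I_6$ involves at most $k$ derivatives on $\vn$ or at most $\partial^\beta\partial_y^2\vn$, which Lemma~\ref{trace} and interpolation control via $\|\vn\|_{\mathcal{H}^k}$ and $\|\partial_y\vn\|_{\mathcal{H}^k}$. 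Moreover, the surviving boundary piece of $I_4$ is precisely the boundary term generated by the integration by parts inside the $K_3$ ($\fn\partial_y\vnn_1$) estimate, and the two cancel when all contributions are summed; there is no remaining boundary obstacle to absorb. This substitution-and-cancellation is the crucial mechanism that your proposal omits, and without it the energy inequality \eqref{ginq} cannot be derived along the route you describe. The rest of your sketch (existence via Galerkin/compactness, \eqref{initial} from the iteration scheme, Moser estimates for the composition terms, the degree-$10$ polynomial after Young's inequality) is consistent with the paper.
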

\begin{proof}
The proof of local existence and uniqueness  of the solution $\vn$ for the linear problem \eqref{3.00} follows by standard method through uniform  estimates \eqref{ginq} combined with compactness argument.  Therefore, our primary focus lies in rigorously establishing the uniform energy estimate \eqref{ginq}.	
As for \eqref{initial}, it is directly derived from a straightforward computation by induction hypotheses $\eqref{3.2}_{1}$.

Applying the differential operator $\partial^{\alpha} = \partial_{t}^{\alpha_1} \partial_{x}^{\alpha_2} \partial_{y}^{\alpha_3}$ with $|\alpha| \leq k$ to both sides of the equations $\eqref{3.00}_{1}$,  and taking the $L^2$
inner product on the resulting
equation with $\partial^{\alpha}\vn$ to yield
\begin{align}
	\begin{split}
	0
&=\langle \partial^{\alpha}\vn, \partial^{\alpha}(	\sn\partial_{t}{\bf{v}}^{n})\rangle
+
\langle \partial^{\alpha}\vn, \partial^{\alpha}(	\an\partial_{x}\vn)\rangle
+\left\langle \partial^{\alpha}\vn, \partial^{\alpha}\left(  \fn\partial_{y}\vn_{1}\right) \right\rangle 
\\
&\quad -\left\langle \partial^{\alpha}\vn, \bn\partial^{\alpha}\partial_{y}^{2}\vn \right\rangle -\left\langle \partial^{\alpha}\vn, \left[ \partial^{\alpha}, \bn\right] \partial_{y}^{2}\vn	\right\rangle-\left\langle \partial^{\alpha}\vn, \partial^{\alpha}g(\vnn)\right\rangle 
=: \sum\limits_{i=1}^{6}K_i.
	\end{split}
\label{3.3}
\end{align}
Now, we estimate the right-hand side of \eqref{3.3} term by term as follows.

\textbf{Dealing with $K_1$  term :} For $K_{1}$, it follows from Lemma \ref{sobolevtype} and \eqref{inq3} that
\begin{align}
		K_1&
		=\left\langle \partial^{\alpha}\vn, \sn \partial^{\alpha}\partial_{t}\vn \right\rangle 
		+\left\langle  \partial^{\alpha}\vn, \left[ \partial^{\alpha}, \sn\right]\partial_{t}\vn \right\rangle \nonumber\\
		&=\frac{1}{2}\frac{d}{dt}\left\langle \partial^{\alpha}\vn, \sn \partial^{\alpha}\vn\right\rangle 
		-\frac{1}{2}\left\langle \partial^{\alpha}\vn, \partial_{t}\sn\partial^{\alpha}\vn\right\rangle 
		+\left\langle  \partial^{\alpha}\vn, \left[ \partial^{\alpha}, \sn\right]\partial_{t}\vn \right\rangle \nonumber\\
		&\geq \frac{1}{2}\frac{d}{dt}\left\langle \partial^{\alpha}\vn, \sn \partial^{\alpha}\vn\right\rangle 
		-C\left\| \partial^{\alpha}\vn\right\|_{L^2(\Omega)}\left( \left\| \partial_{t}\sn\right\|_{L^\infty(\Omega)}\left\| \partial^{\alpha}\vn\right\|_{L^2(\Omega)}+\left\| \sn\right\|_{\mathcal{H}^k(\Omega)}\left\| \partial_{t}\vn\right\| _{\mathcal{H}^{k-1}}\right) \nonumber\\
		&\geq \frac{1}{2}\frac{d}{dt}\left\langle \partial^{\alpha}\vn, \sn \partial^{\alpha}\vn\right\rangle 
		-C\left\| \vn\right\|_{\mathcal{H}^k(\Omega)}^{2}\left( 1+\left\| \vnn\right\|_{\mathcal{H}^3(\Omega)}^{4}+\left\| \vnn\right\|_{\mathcal{H}^k(\Omega)}^{3}\right) .
\label{k1}
\end{align}

\textbf{Dealing with $K_2$  term :}
Decompose $K_{2}$ into
\begin{align*}
	K_{2}=K_{2}^{1}+K_{2}^{2}
\end{align*}
with
\begin{align*}
	K_{2}^{1}=\left\langle \partial^{\alpha} \vn, \an\partial^{\alpha}\partial_{x}\vn
	\right\rangle ,
\end{align*}
and
\begin{align*}
	K_{2}^{1}=\left\langle \partial^{\alpha} \vn, \left[ \partial^{\alpha},\an\right] \partial_{x}\vn
	\right\rangle .
\end{align*}
Using the  integration by parts and  Lemma \ref{sobolevtype}, $K_{2}^{1}$ can be estimated as follows.
\begin{align}
	K_{2}^{1} 
	&\leq \frac{1}{2}\left| \left\langle \partial^{\alpha} \vn, \partial_{x}\an\partial^{\alpha}\vn
	\right\rangle\right| \nonumber\\
	& \leq C\left\| \partial^{\alpha}\vn\right\|_{L^{2}(\Omega)}^{2}\left\| \partial_{x}\an\right\|_{L^\infty(\Omega)} \nonumber\\
	&\leq
	C\left\| \vn\right\|_{\mathcal{H}^k(\Omega)}^{2}\left( 1+\left\| \vnn\right\|_{\mathcal{H}^3(\Omega)}^{5}\right).\label{3.5}
\end{align}
Similarly, by \eqref{inq3}, we also have 
\begin{align}
	K_{2}^{2} 
	&\leq C \left\| \partial^{\alpha}\vn\right\|_{L^{2}(\Omega)}\left\| \left[ \partial^{\alpha},\an\right] \partial_{x}\vn\right\|_{L^{2}(\Omega)}\nonumber\\
	&\leq C \left\| \partial^{\alpha}\vn\right\|_{L^{2}(\Omega)}
	\left\| \an\right\|_{H^{k}(\Omega)}
	\left\|  \partial_{x}\vn\right\|_{H^{k-1}(\Omega)} \nonumber\\
	&\leq  	C\left\| \vn\right\|_{\mathcal{H}^k(\Omega)}^{2}\left( 1+\left\| \vnn\right\|_{\mathcal{H}^4(\Omega)}^{4}\right).
\label{3.6}
\end{align}
Combining estimates \eqref{3.5} and \eqref{3.6}, we obtain
\begin{align}
	K_{2}
	\leq  	C\left\| \vn\right\|_{\mathcal{H}^k(\Omega)}^{2}\left( 1+\left\| \vnn\right\|_{\mathcal{H}^k(\Omega)}^{5}\right).
	\label{k2}
\end{align}

\textbf{Dealing with $K_3$  term :}
To estimate $K_{3}$, 
we divide $K_{3}$ into the following three parts.
\begin{align}
\begin{split}
K_{3}
&=\left\langle \partial^{\alpha}\vn,  \fn\partial^{\alpha}\partial_{y}\vn_{1} \right\rangle 
+
\left\langle \partial^{\alpha}\vn,  {{\bf{F}}({\bf{v}}_{1}^{n-1}, \partial^{\alpha}\partial_{y}{\bf{v}}_{1}^{n-1})} \partial_{y}\vn_{1} \right\rangle  \\
&\quad+
\sum\limits_{ \alpha' \leq \alpha, 1\leq |\alpha'|}
\sum\limits_{ 0\leq \beta' \leq \alpha',  |\beta'| \leq |\alpha'|-1}
C^{\alpha'}_\alpha C^{\beta'}_{\alpha'}
\left\langle \partial^{\alpha}\vn, 
{{\bf{F}}(\partial^{\alpha'-\beta'}{\bf{v}}_{1}^{n-1}, \partial^{\beta'}\partial_{y}{\bf{v}}_{1}^{n-1})}
\partial^{\alpha-\alpha'}\partial_{y}\vn_{1} \right\rangle .
\end{split}
\label{3.7}
\end{align}
For the first term, a direct calculation  leads to
\begin{align}
&\left\langle \partial^{\alpha}\vn,  \fn\partial^{\alpha}\partial_{y}\vn_{1} \right\rangle \nonumber\\
&\leq
\left\| \partial^{\alpha}\vn\right\|_{L^2(\Omega)} \left\| \fn\right\|_{L^\infty(\Omega)}\left\| \partial^{\alpha}\partial_{y}\left(\vn+(U\phi, I, H^{2}/2)\right) \right\|_{L^2(\Omega)} \nonumber\\
&\leq C\left\| \vn\right\|_{\mathcal{H}^k(\Omega)}\left( 1+\left\| \vnn\right\|_{\mathcal{H}^3(\Omega)}^{5}\right)\left(1+\left\| \partial_{y}\vn\right\|_{\mathcal{H}^k(\Omega)}\right). 
\label{3.8}
\end{align}
For the second term, integration by parts leads to
\begin{align}
\begin{split}
&\left\langle \partial^{\alpha}\vn,  {{\bf{F}}({\bf{v}}_{1}^{n-1}, \partial^{\alpha}\partial_{y}{\bf{v}}_{1}^{n-1})} \partial_{y}\vn_{1} \right\rangle
\\
&=\int_{\mathbb{T}} \partial^{\alpha}\vn~ {{\bf{F}}({\bf{v}}_{1}^{n-1}, \partial^{\alpha}{\bf{v}}_{1}^{n-1})} \partial_{y}\vn_{1} \big|_{y=0}dx-
\left\langle \partial^{\alpha}\partial_{y}\vn,  {{\bf{F}}({\bf{v}}_{1}^{n-1}, \partial^{\alpha}{\bf{v}}_{1}^{n-1})} \partial_{y}\vn_{1} \right\rangle
\\
&\quad-
\left\langle \partial^{\alpha}\vn,  {{\bf{F}}({\bf{v}}_{1}^{n-1}, \partial^{\alpha}{\bf{v}}_{1}^{n-1})} \partial_{y}^{2}\vn_{1} \right\rangle
-\left\langle \partial^{\alpha}\vn,  {{\bf{F}}(\partial_{y}{\bf{v}}_{1}^{n-1}, \partial^{\alpha}{\bf{v}}_{1}^{n-1})} \partial_{y}\vn_{1} \right\rangle.
\end{split}
\label{3.9}
\end{align}
Notice from the last three terms on the right-hand side of the equation \eqref{3.9} that the order of $\partial^{\alpha}\vnn$ cannot exceed $k$. Then thanks to \eqref{inq1} and  H\"{o}lder's inequality, we
get, by a similar derivation of \eqref{3.8}, that
\begin{align}
&\left| \left\langle \partial^{\alpha}\partial_{y}\vn,  {{\bf{F}}({\bf{v}}_{1}^{n-1}, \partial^{\alpha}{\bf{v}}_{1}^{n-1})} \partial_{y}\vn_{1} \right\rangle
+
\left\langle \partial^{\alpha}\vn,  {{\bf{F}}({\bf{v}}_{1}^{n-1}, \partial^{\alpha}{\bf{v}}_{1}^{n-1})} \partial_{y}^{2}\vn_{1} \right\rangle
+\left\langle \partial^{\alpha}\vn,  {{\bf{F}}(\partial_{y}{\bf{v}}_{1}^{n-1}, \partial^{\alpha}{\bf{v}}_{1}^{n-1})} \partial_{y}\vn_{1} \right\rangle\right| 
\nonumber\\
&\leq C\left( \left\| \vn\right\|_{\mathcal{H}^k(\Omega)}+\left\| \partial_{y}\vn\right\|_{\mathcal{H}^k(\Omega)} \right) 
\left( \left\| \partial_{y}\vn_{1}\right\|_{L^\infty(\Omega)}
+
\left\| \partial_{y}^{2}\vn_{1}\right\|_{L^\infty(\Omega)}\right)
\nonumber \\
&\qquad \cdot
\left( 
\left\| {{\bf{F}}({\bf{v}}_{1}^{n-1},\cdot)} \right\|_{\mathcal{H}^k(\Omega)}
\left\|{{\bf{F}}(\cdot, {\bf{v}}_{1}^{n-1})} 
 \right\|_{\mathcal{H}^k(\Omega)}
 +
 \left\| {{\bf{F}}(\partial_{y}{\bf{v}}_{1}^{n-1},\cdot)} \right\|_{L^\infty(\Omega)}
 \left\|{{\bf{F}}(\cdot, \partial^{\alpha}{\bf{v}}_{1}^{n-1})} 
 \right\|_{L^2(\Omega)}
 \right)
 \nonumber\\
 &\leq C \left( \left\| \vn\right\|_{\mathcal{H}^k(\Omega)}+\left\| \partial_{y}\vn\right\|_{\mathcal{H}^k(\Omega)} \right) 
 \left(1+ \left\| \vn\right\|_{\mathcal{H}^4(\Omega)} \right) 
  \left(1+ \left\| \vnn\right\|_{\mathcal{H}^k(\Omega)}^{5}\right),
  \label{3.10}
\end{align}
where the matrix ${{\bf{F}}(\partial^{\beta_{1}}{\bf{v}}_{1}^{n-1}, \partial^{\beta_{2}}{\bf{v}}_{1}^{n-1})}$ is formally decomposed into the product of two related matrices ${{\bf{F}}_{1}(\partial^{\beta_{1}}{\bf{v}}_{1}^{n-1}, \cdot)}\cdot{{\bf{F}}_{2}(\cdot, \partial^{\beta_{2}}{\bf{v}}_{1}^{n-1})}$. Combining \eqref{3.9} and \eqref{3.10}, we arrive at
\begin{align}
\begin{split}
	&-\left\langle \partial^{\alpha}\vn,  {{\bf{F}}({\bf{v}}_{1}^{n-1}, \partial^{\alpha}\partial_{y}{\bf{v}}_{1}^{n-1})} \partial_{y}\vn_{1} \right\rangle
\\
&=-\int_{\mathbb{T}} \partial^{\alpha}\vn~ {{\bf{F}}({\bf{v}}_{1}^{n-1}, \partial^{\alpha}{\bf{v}}_{1}^{n-1})} \partial_{y}\vn_{1} \big|_{y=0}dx
\\
&\quad+
C \left( \left\| \vn\right\|_{\mathcal{H}^k(\Omega)}+\left\| \partial_{y}\vn\right\|_{\mathcal{H}^k(\Omega)} \right) 
\left(1+ \left\| \vn\right\|_{\mathcal{H}^4(\Omega)} \right) 
\left(1+ \left\| \vnn\right\|_{\mathcal{H}^k(\Omega)}^{5}\right).
\end{split}
\label{3.11}
\end{align}
For the third term, on the one hand, when $\alpha'=\beta$, we have
\begin{align}
&\sum\limits_{ \alpha' \leq \alpha, 1\leq |\alpha'|}
\sum\limits_{ \beta \leq \alpha',  |\beta| \leq |\alpha'|-1}
C^{\alpha'}_\alpha C^{\beta}_{\alpha'}
\left\langle \partial^{\alpha}\vn, 
{{\bf{F}}(\partial^{\alpha'-\beta}{\bf{v}}_{1}^{n-1}, \partial^{\beta}\partial_{y}{\bf{v}}_{1}^{n-1})}
\partial^{\alpha-\alpha'}\partial_{y}\vn_{1} \right\rangle 
\nonumber\\
&=
\sum\limits_{ \beta \leq \alpha, 1\leq |\beta| \leq |\alpha|-1}
C^{\beta}_\alpha
\left\langle \partial^{\alpha}\vn, 
{{\bf{F}}({\bf{v}}_{1}^{n-1}, \partial^{\beta}\partial_{y}{\bf{v}}_{1}^{n-1})}
\partial^{\alpha-\beta}\partial_{y}\vn_{1} \right\rangle 
\nonumber\\
&
\leq
C\left\| \partial^{\alpha}\vn\right\|_{L^2(\Omega)}
\left\| {{\bf{F}}_{1}({\bf{v}}_{1}^{n-1}, \cdot)}\right\|_{L^\infty(\Omega)}
\left\| {{\bf{F}}_{2}(\cdot, \partial^{\beta}\partial_{y}{\bf{v}}_{1}^{n-1})}
\partial^{\alpha-\beta}\partial_{y}\vn_{1}\right\|_{L^2(\Omega)}
\nonumber\\
&
\leq C\left\| \vn\right\|_{\mathcal{H}^{k}(\Omega)}
\left( 1+\left\| {\bf{v}}^{n-1}\right\|_{\mathcal{H}^{k}(\Omega)}^{5}\right) 
\left( 1+\left\| \vn\right\|_{\mathcal{H}^{k}(\Omega)}\right),
\label{3.12}
\end{align}
where we have used the   following fact
\begin{align*}
\left\| {{{\bf{F}}_{2}}(\cdot, \partial^{\beta}\partial_{y}{\bf{v}}_{1}^{n-1})}
\partial^{\alpha-\beta}\partial_{y}\vn_{1}\right\|_{L^2(\Omega)}
&\leq
C
\left\{
\begin{aligned}
	&\left\| {{\bf{F}}_{2}(\cdot, \partial^{\beta}\partial_{y}{\bf{v}}_{1}^{n-1})}\right\|_{L^\infty(\Omega)}
	\left\| \partial^{\alpha-\beta}\partial_{y}\vn_{1}\right\|_{L^2(\Omega)}~~\beta=1, 2,\nonumber\\
	& \left\| {{\bf{F}}_{2}(\cdot, \partial^{\beta}\partial_{y}{\bf{v}}_{1}^{n-1})}\right\|_{L^2(\Omega)}
	\left\| \partial^{\alpha-\beta}\partial_{y}\vn_{1}\right\|_{L^\infty(\Omega)}~~3\leq\beta \leq k-1, \nonumber\\
\end{aligned}
\right.
\\
& \leq
C
\left( 1+\left\| {\bf{v}}^{n-1}\right\|_{\mathcal{H}^{k}(\Omega)}\right) 
\left( 1+\left\| \partial_{y}\vn\right\|_{\mathcal{H}^{k-1}(\Omega)}\right) .
\end{align*}
On the other hand, when $\alpha'-\beta \geq 1$, by \eqref{inq1}, we arrive at
\begin{align*}
&	\left\|
	{{\bf{F}}(\partial^{\alpha'-\beta}{\bf{v}}_{1}^{n-1}, \partial^{\beta}\partial_{y}{\bf{v}}_{1}^{n-1})}
	\partial^{\alpha-\alpha'}\partial_{y}\vn_{1}\right\|_{L^2(\Omega)}
	\nonumber\\
	&\leq
	C
	\left\{
	\begin{aligned}
		&\left\| {{\bf{F}}(\partial^{\alpha'-\beta}{\bf{v}}_{1}^{n-1}, \partial^{\beta}\partial_{y}{\bf{v}}_{1}^{n-1})}\right\|_{L^\infty(\Omega)}
		\left\| \partial^{\alpha-\alpha'}\partial_{y}\vn_{1}\right\|_{L^2(\Omega)}~~\alpha'=1, 2,\nonumber\\
		& \left\| {{\bf{F}}(\partial^{\alpha'-\beta}{\bf{v}}_{1}^{n-1}, \partial^{\beta}\partial_{y}{\bf{v}}_{1}^{n-1})}\right\|_{L^2(\Omega)}
		\left\| \partial^{\alpha-\alpha'}\partial_{y}\vn_{1}\right\|_{L^\infty(\Omega)}~~3\leq\alpha' \leq k, ~~ 0 \leq |\beta| \leq |\alpha'|-1, \nonumber\\
	\end{aligned}
	\right.
	\\
	&\leq
C
\left\{
\begin{aligned}
	&\left( 1+\left\| \vnn\right\|^5_{\mathcal{H}^{4}(\Omega)}\right)
	\left( 1+\left\| \partial_{y}\vn\right\|_{\mathcal{H}^{k-1}(\Omega)}\right) ,\nonumber\\
	& \left\| {{\bf{F}}(\partial^{\alpha'-\beta-1}\partial{\bf{v}}_{1}^{n-1}, \partial^{\beta}\partial_{y}{\bf{v}}_{1}^{n-1})}\right\|_{L^2(\Omega)}
	\left( 1+\left\| \partial_{y}\vn\right\|_{\mathcal{H}^{k-1}(\Omega)}\right),~~3\leq\alpha' \leq k, ~~ 0 \leq |\beta| \leq |\alpha'|-1, \nonumber\\
\end{aligned}
\right.
\\
	&\leq
C
\left\{
\begin{aligned}
&\left( 1+\left\| \vnn\right\|^5_{\mathcal{H}^{4}(\Omega)}\right)
\left( 1+\left\| \vn\right\|_{\mathcal{H}^{k}(\Omega)}\right) ,\nonumber\\
	& \left\| {{\bf{F}}_{1}({\bf{v}}_{1}^{n-1}, \cdot)}\right\|_{\mathcal{H}^{k-1}(\Omega)}
	\left\| {{\bf{F}}_{2}( \cdot, \partial_{y}{\bf{v}}_{1}^{n-1})}\right\|_{\mathcal{H}^{k-1}(\Omega)}
\left( 1+\left\| \partial_{y}\vn\right\|_{\mathcal{H}^{k-1}(\Omega)}\right), \nonumber\\
\end{aligned}
\right.
\\
	& \leq
	C
	\left( 1+\left\| {\bf{v}}^{n-1}\right\|_{\mathcal{H}^{k}(\Omega)}^{5}\right) 
	\left( 1+\left\| \vn\right\|_{\mathcal{H}^{k}(\Omega)}\right),
\end{align*}
where $\partial\vnn_{1}=\sum\limits_{|\alpha|=1}\partial^{\alpha}\vnn_{1}$.
With the above estimate, it is easy to check, for
$\alpha'-\beta \geq 1$, that 
\begin{align}
\begin{split}
	&\sum_{ \alpha' \leq \alpha, 1\leq |\alpha'|}
	\sum\limits_{ \beta \leq \alpha',  |\beta| \leq |\alpha'|-1}
	C^{\alpha'}_\alpha C^{\beta}_{\alpha'}
	\left\langle \partial^{\alpha}\vn, 
	{{\bf{F}}(\partial^{\alpha'-\beta}{\bf{v}}_{1}^{n-1}, \partial^{\beta}\partial_{y}{\bf{v}}_{1}^{n-1})}
	\partial^{\alpha-\alpha'}\partial_{y}\vn_{1} \right\rangle 
\\
	&
	\leq C\left\| \vn\right\|_{\mathcal{H}^{k}(\Omega)}
	\left( 1+\left\| {\bf{v}}^{n-1}\right\|_{\mathcal{H}^{k}(\Omega)}^{5}\right) 
	\left( 1+\left\| \vn\right\|_{\mathcal{H}^{k}(\Omega)}\right).
	\end{split}
	\label{3.13}
\end{align}
Plugging the  estimates \eqref{3.8} and \eqref{3.11}-\eqref{3.13} of three parts into \eqref{3.7}, we deduce that
\begin{align}
\begin{split}
-K_{3} &\leq 
C\left( \left\| \vn\right\|_{\mathcal{H}^{k}(\Omega)}+\left\| \partial_{y}\vn\right\|_{\mathcal{H}^{k}(\Omega)}\right) 
\left( 1+\left\| {\bf{v}}^{n-1}\right\|_{\mathcal{H}^{k}(\Omega)}^{5}\right) 
\left( 1+\left\| \vn\right\|_{\mathcal{H}^{k}(\Omega)}\right)\\
&
\qquad -\int_{\mathbb{T}} \partial^{\alpha}\vn~ {{\bf{F}}({\bf{v}}_{1}^{n-1}, \partial^{\alpha}{\bf{v}}_{1}^{n-1})} \partial_{y}\vn_{1} \big|_{y=0}dx.
\end{split}
\label{k3}
\end{align}

\textbf{Dealing with $K_4$  term :}
We first get, by using integration by parts, that
\begin{align*}
\begin{split}
K_{4}&=-\left\langle \partial^{\alpha}\vn, \bn\partial^{\alpha}\partial_{y}^{2}\vn \right\rangle \\
&=\int_{\mathbb{T}} \partial^{\alpha}\vn\bn\partial^{\alpha}\partial_{y}\vn \big|_{y=0}~dx
+\left\langle \partial^{\alpha}\partial_{y}\vn, \bn\partial^{\alpha}\partial_{y}\vn \right\rangle
+\left\langle \partial^{\alpha}\vn, \partial_{y}\bn\partial^{\alpha}\partial_{y}\vn \right\rangle.
\end{split}
\end{align*}
Since the matrix ${\bf{B}}$ given in \eqref{B} is positive, it follows form  positive lower bound condition $\eqref{3.2}_{4}$ that 
\begin{align}
	\begin{split}
		-\left\langle \partial^{\alpha}\vn, \bn\partial^{\alpha}\partial_{y}^{2}\vn \right\rangle 
		&\geq \int_{\mathbb{T}} \partial^{\alpha}\vn\bn\partial^{\alpha}\partial_{y}\vn \big|_{y=0}~dx
		\\
		&\quad+c_{\delta}\left\| \partial^{\alpha}\partial_{y}\vn\right\|_{L^2(\Omega)}^{2}-
		C\left\| \vn\right\|_{\mathcal{H}^{k}(\Omega)}
		\left( 1+\left\| {\bf{v}}^{n-1}\right\|_{\mathcal{H}^{3}(\Omega)}^{3}\right) 
	\left\| \partial^{\alpha}\partial_{y }\vn\right\|_{\mathcal{H}^{k}(\Omega)},
	\end{split}
\label{3.15}
\end{align}
where  in the sequel, $c_\delta>0$ depends only on $\delta$, $\mu$, $\mu'$, $\sigma$ and the outflow.

Next, we now turn to handle the boundary layer term in \eqref{3.15}. It is straightforward to observe that the boundary term at 
$y=0$ will vanish in the case where $\partial^{\alpha}=\partial^{\alpha_1}_{t}\partial^{\alpha_2}_{x}$, when performing the estimate
\begin{align}
\int_{\mathbb{T}} \partial^{\alpha}\vn\bn\partial^{\alpha}\partial_{y}\vn \big|_{y=0}~dx
=
\int_{\mathbb{T}} (0, 0, \partial^{\alpha} q^{n})\bn(\partial^{\alpha}\partial_{y}u^{n}, \partial^{\alpha}\partial_{y}w^{n}, 0) \big|_{y=0}~dx=0.
\label{3.16}
\end{align}
Then, we only need to deal the case of $\partial^{\alpha}=\partial^{\beta}\partial_{y}$, $|\beta|\leq k-1$.
The main observation is that we can rewrite the boundary term by the identity \eqref{3.3} as follows:
\begin{align}
&\int_{\mathbb{T}} \partial^{\alpha}\vn\bn\partial^{\alpha}\partial_{y}\vn \big|_{y=0}~dx
\nonumber\\
&
=\int_{\mathbb{T}} \partial^{\beta}\partial_{y}\vn\bn\partial^{\beta}\partial_{y}^{2}\vn \big|_{y=0}~ dx
\nonumber\\
&
=
\int_{\mathbb{T}} \partial^{\beta}\partial_{y}\vn
\partial^{\beta}\left({\bf{S}}({\bf{v}}^{n-1}_{1})\partial_{t}{\bf{v}}^{n} \right) \big|_{y=0} dx
+
\int_{\mathbb{T}} \partial^{\beta}\partial_{y}\vn
\partial^{\beta}\left(\an\partial_{x}\vn \right) \big|_{y=0} dx
\nonumber\\
&\quad
+
\int_{\mathbb{T}} \partial^{\beta}\partial_{y}\vn
\fn\partial^{\beta}\partial_{y}\vn_{1}  \big|_{y=0} dx
+
\int_{\mathbb{T}} \partial^{\beta}\partial_{y}\vn
\left[ \partial^{\beta}, \fn\right] \partial_{y}\vn_{1}  \big|_{y=0} dx
\nonumber\\
&
\quad
-
\int_{\mathbb{T}} \partial^{\beta}\partial_{y}\vn
\left[ \partial^{\beta}, \bn\right]\partial_{y}^{2}\vn \big|_{y=0} dx-
\int_{\mathbb{T}} \partial^{\beta}\partial_{y}\vn
\partial^{\beta}\left(g(\vn) \right) \big|_{y=0} dx=\sum_{i=1}^{6}I_{i}.
\end{align}
By applying the trace estimate \eqref{trace1}, Lemma \ref{sobolevtype} and \eqref{inq3}, we get
\begin{align}
I_{1}&=\int_{\mathbb{T}} \partial^{\beta}\partial_{y}\vn
{\bf{S}}({\bf{v}}^{n-1}_{1})\partial^{\beta}\partial_{t}{\bf{v}}^{n}  \big|_{y=0} dx+\int_{\mathbb{T}} \partial^{\beta}\partial_{y}\vn
\left[\partial^{\beta}, {\bf{S}}({\bf{v}}^{n-1}_{1})\right] \partial_{t}{\bf{v}}^{n}  \big|_{y=0} dx \nonumber\\
& \leq C\left( \left\| \partial^{\beta}\partial\vn|_{y=0}\right\|_{L^2(\mathbb{T})}^{2}\left\| \sn|_{y=0}\right\|_{L^\infty(\mathbb{T})}
+
\left\| \partial^{\beta}\partial_{y}\vn\right\|_{L^2(\Omega)}^{\frac{1}{2}}\left\| \partial^{\beta}\partial_{y}^{2}\vn\right\|
_{L^2(\Omega)}^{\frac{1}{2}}
\left\| \left[\partial^{\beta}, {\bf{S}}({\bf{v}}^{n-1}_{1})\right] \partial_{t}{\bf{v}}^{n}\right\|_{\mathcal{H}^1(\Omega)}
\right) 
\nonumber\\
& \leq C\left\| \partial_{y}\vn\right\|_{\mathcal{H}^k(\Omega)}
 \left\|\vn\right\|_{\mathcal{H}^k(\Omega)}\left\| \sn\right\|_{H^3(\Omega)}+
\left( \left\|\vn\right\|_{\mathcal{H}^k(\Omega)}+ \left\| \partial_{y}\vn\right\|_{\mathcal{H}^k(\Omega)}\right) 
\left\| \sn\right\|_{\mathcal{H}^k(\Omega)}\left\|\partial_{ {t}}\vn\right\|_{\mathcal{H}^{k-1}(\Omega)}
\nonumber\\
& \leq C\left( \left\|\vn\right\|_{\mathcal{H}^k(\Omega)}+ \left\| \partial_{y}\vn\right\|_{\mathcal{H}^k(\Omega)}\right) \left\|\vn\right\|_{\mathcal{H}^k(\Omega)}
\left( 1+\left\| \vnn\right\|_{\mathcal{H}^k(\Omega)}^4
\right) \label{i1} .
\end{align}
Similarly, we have
\begin{align}
	I_{2} \leq C\left( \left\|\vn\right\|_{\mathcal{H}^k(\Omega)}+ \left\| \partial_{y}\vn\right\|_{\mathcal{H}^k(\Omega)}\right)\left\|\vn\right\|_{\mathcal{H}^k(\Omega)}
	\left( 1+\left\| \vnn\right\|_{\mathcal{H}^k(\Omega)}^5
	\right) .
\label{i2}
\end{align}
and
\begin{align}
	I_{3} \leq C\left\| \partial_{y}\vn\right\|_{\mathcal{H}^k(\Omega)}\left\|\vn\right\|_{\mathcal{H}^k(\Omega)}
	\left( 1+\left\| \vnn\right\|_{\mathcal{H}^k(\Omega)}^5
	\right) .
	\label{i3}
\end{align}
Notice that
\begin{align*}
\begin{split}
	I_{4}
	&=
	\int_{\mathbb{T}} \partial^{\beta}\partial_{y}\vn	{{\bf{F}}({\bf{v}}_{1}^{n-1},  \partial^{\beta} \partial_{y}{\bf{v}}_{1}^{n-1})}
	 \partial_{y}\vn_{1}  \big|_{y=0} dx  \\
	&\quad+
	\sum\limits_{ \beta' \leq \beta, 1\leq |\beta'|}
	\sum\limits_{ 0\leq \gamma' \leq \beta',  |\gamma'| \leq |\beta'|-1}
	C^{\beta'}_\beta C^{\gamma'}_{\beta'}
\int_{\mathbb{T}} \partial^{\beta}\vn, 
	{{\bf{F}}(\partial^{\beta'-\gamma'}{\bf{v}}_{1}^{n-1}, \partial^{\gamma'}\partial_{y}{\bf{v}}_{1}^{n-1})}
	\partial^{\beta-\beta'}\partial_{y}\vn_{1} \big|_{y=0}dx .
\end{split}
\end{align*}
It is obvious that the structure of
$I_{4}$ are similar to the last two terms on the right-hand side of \eqref{3.7}. Then, in a similar way with $K_{3}$, we can infer, by case analysis, that 
\begin{align*}
&		\sum\limits_{ \beta' \leq \beta, 1\leq |\beta'|}
		\sum\limits_{ 0\leq \gamma' \leq \beta',  |\gamma'| \leq |\beta'|-1}
		C^{\beta'}_\beta C^{\gamma'}_{\beta'}
		\int_{\mathbb{T}} \partial^{\beta}\vn, 
		{{\bf{F}}(\partial^{\beta'-\gamma'}{\bf{v}}_{1}^{n-1}, \partial^{\gamma'}\partial_{y}{\bf{v}}_{1}^{n-1})}
		\partial^{\beta-\beta'}\partial_{y}\vn_{1} \big|_{y=0}dx 
	\\
& \leq 
C\left\| \partial_{y}\vn\right\|_{\mathcal{H}^k(\Omega)}\left\|\vn\right\|_{\mathcal{H}^k(\Omega)}
\left( 1+\left\| \vnn\right\|_{\mathcal{H}^k(\Omega)}^5
\right) ,
\end{align*}
which directly gives 
\begin{align}
		I_{4}
	\leq 
		\int_{\mathbb{T}} \partial^{\alpha}\vn	{{\bf{F}}({\bf{v}}_{1}^{n-1},  \partial^{\alpha} {\bf{v}}_{1}^{n-1})}
		\partial_{y}\vn_{1}  \big|_{y=0} dx  
		+
C\left\| \partial_{y}\vn\right\|_{\mathcal{H}^k(\Omega)}\left\|\vn\right\|_{\mathcal{H}^k(\Omega)}
\left( 1+\left\| \vnn\right\|_{\mathcal{H}^k(\Omega)}^5
\right) .
\label{i4}
\end{align}
In addition, it directly follows 
\begin{align*}
I_{5} \leq  \left\| \partial^{\beta}\partial_{y}\vn|_{y=0}\right\|_{L^2(\mathbb{T}_{x})}^{2}\left\| \left[ \partial^{\beta}, \bn\right]\partial_{y}^{2}\vn|_{y=0}\right\|_{L^2(\mathbb{T}_{x})}.
\end{align*}
Again due to \eqref{trace1}, Lemma \ref{sobolevtype} and \eqref{inq3}, we derive that
\begin{align*}
&\left\| \left[ \partial^{\beta}, \bn\right]\partial_{y}^{2}\vn |_{y=0} \right\|_{L^2({\mathbb{T}_{x}})}
\\
&\leq 
\left\|  \partial^{\beta} \bn |_{y=0} \right\|_{L^2({\mathbb{T}_{x}})}
\left\| \partial_{y}^{2}\vn |_{y=0} \right\|_{L^\infty({\mathbb{T}_{x}})}
+\sum\limits_{  1\leq |\beta'|\leq |\beta|-1}
C^{\beta'}_\beta 
\left\|  \partial^{\beta'} \bn |_{y=0} \right\|_{L^\infty({\mathbb{T}_{x}})}
\left\| \partial^{\beta-\beta'}\partial_{y}^{2}\vn |_{y=0}
 \right\|_{L^2({\mathbb{T}_{x}})}
 \\
&\leq
C\left( 1+\left\| \vnn\right\|_{\mathcal{H}^{k}(\Omega)}^{3}\right) 
\left( \left\| \partial^{2}_{y}\vn\right\|_{H^{2}(\Omega)}
+\left\| \partial^{2}_{y}\vn\right\|_{\mathcal{H}^{k-2}(\Omega)}^{\frac12}
\left\| \partial^{3}_{y}\vn\right\|_{\mathcal{H}^{k-2}(\Omega)}^{\frac12}
\right) \\
&
\leq 
C\left( 1+\left\| \vnn\right\|_{\mathcal{H}^{k}(\Omega)}^{3}\right) 
\left( \left\| \vn\right\|_{\mathcal{H}^{k}(\Omega)}
+\left\| \vn\right\|_{\mathcal{H}^{k}(\Omega)}^{\frac12}
\left\| \partial_{y}\vn\right\|_{\mathcal{H}^{k}(\Omega)}^{\frac12}
\right),
\end{align*}
and then, 
\begin{align}
	I_{5}
	&\leq 
	C\left\| \partial^{\beta}\partial_{y}\vn\right\|_{L^2(\Omega)}^{\frac{1}{2}}\left\| \partial^{\beta}\partial_{y}^{2}\vn\right\|
	_{L^2(\Omega)}^{\frac{1}{2}}
	\left( 1+\left\| \vnn\right\|_{\mathcal{H}^{k}(\Omega)}^{2}\right) 
	\left( \left\| \vn\right\|_{\mathcal{H}^{k}(\Omega)}
	+\left\| \vn\right\|_{\mathcal{H}^{k}(\Omega)}^{\frac12}
	\left\| \partial_{y}\vn\right\|_{\mathcal{H}^{k}(\Omega)}^{\frac12}
	\right)
	\nonumber\\
&\leq 
 C\left( \left\|\vn\right\|_{\mathcal{H}^k(\Omega)}+ \left\| \partial_{y}\vn\right\|_{\mathcal{H}^k(\Omega)}\right)\left\|\vn\right\|_{\mathcal{H}^k(\Omega)}
\left( 1+\left\| \vnn\right\|_{\mathcal{H}^k(\Omega)}^3
\right).\label{i5}
\end{align}
Finally, a similar derivation with $I_{5}$ gives rise to
\begin{align}
I_{6}&\leq
	C\left\| \partial^{\beta}\partial_{y}\vn\right\|_{L^2(\Omega)}^{\frac{1}{2}}\left\| \partial^{\beta}\partial_{y}^{2}\vn\right\|
_{L^2(\Omega)}^{\frac{1}{2}}
\left\| \partial^{\beta}\left(g(\vnn) \right)\right\|_{L^2(\Omega)}^{\frac{1}{2}}\left\| \partial^{\alpha}\left(g(\vnn) \right)\right\|
_{L^2(\Omega)}^{\frac{1}{2}}
\nonumber\\
& \leq C\left\| \vn\right\|_{\mathcal{H}^{k}(\Omega)}^{\frac12}
\left\| \partial_{y}\vn\right\|_{\mathcal{H}^{k}(\Omega)}^{\frac12}
\left( 1+\left\| \vnn\right\|_{\mathcal{H}^{k-1}(\Omega)}^4\right)^{\frac12}
\left( 1+\left\| \vnn\right\|_{\mathcal{H}^{k}(\Omega)}^4\right)^{\frac12}
\nonumber\\
&\leq 
C\left( \left\|\vn\right\|_{\mathcal{H}^k(\Omega)}+ \left\| \partial_{y}\vn\right\|_{\mathcal{H}^k(\Omega)}\right)
\left( 1+\left\| \vnn\right\|_{\mathcal{H}^k(\Omega)}^4
\right).
\label{i6}
\end{align}
Combining estimates \eqref{i1}-\eqref{i6} of $I_{1}-I_{6}$, we have
\begin{align*}
-&\int_{\mathbb{T}} \partial^{\alpha}\vn\bn\partial^{\alpha}\partial_{y}\vn \big|_{y=0}~dx
\\
&\geq
	-\int_{\mathbb{T}} \partial^{\alpha}\vn	{{\bf{F}}({\bf{v}}_{1}^{n-1},  \partial^{\alpha} {\bf{v}}_{1}^{n-1})}
\partial_{y}\vn_{1}  \big|_{y=0} dx  
-C\left( \left\|\vn\right\|_{\mathcal{H}^k(\Omega)}+ \left\| \partial_{y}\vn\right\|_{\mathcal{H}^k(\Omega)}\right)\left\|\vn\right\|_{\mathcal{H}^k(\Omega)}
\left( 1+\left\| \vnn\right\|_{\mathcal{H}^k(\Omega)}^5
\right)
\end{align*}
from which and \eqref{3.15}, it follows that
\begin{align}
	\begin{split}
	K_{4}=	-\left\langle \partial^{\alpha}\vn, \bn\partial^{\alpha}\partial_{y}^{2}\vn \right\rangle 
		&\geq -\int_{\mathbb{T}} \partial^{\alpha}\vn	{{\bf{F}}({\bf{v}}_{1}^{n-1},  \partial^{\alpha} {\bf{v}}_{1}^{n-1})}
		\partial_{y}\vn_{1}  \big|_{y=0} dx  +c_{\delta}\left\| \partial^{\alpha}\partial_{y}\vn\right\|_{L^2(\Omega)}^{2}
		\\
		&\qquad
		-C\left( \left\|\vn\right\|_{\mathcal{H}^k(\Omega)}+ \left\| \partial_{y}\vn\right\|_{\mathcal{H}^k(\Omega)}\right)\left\|\vn\right\|_{\mathcal{H}^k(\Omega)}
		\left( 1+\left\| \vnn\right\|_{\mathcal{H}^k(\Omega)}^5
		\right).
	\end{split}
	\label{k4}
\end{align}
\textbf{Dealing with $K_5$-$K_6$  terms :} Simple and direct calculations give
\begin{align}
K_{5}+K_{6} & \leq 
+
\left| \left\langle \partial^{\alpha}\vn, \left[ \partial^{\alpha}, \bn\right] \partial_{y}^{2}\vn	\right\rangle\right| +\left| \left\langle \partial^{\alpha}\vn, \partial^{\alpha}g(\vn)\right\rangle \right| 
\nonumber\\
&
\leq C\left\|\vn\right\|_{\mathcal{H}^k(\Omega)} 
\left( 1+\left\| \vnn\right\|_{\mathcal{H}^k(\Omega)}^4
\right)
\left( 1+\left\|\vn\right\|_{\mathcal{H}^k(\Omega)}+ \left\| \partial_{y}\vn\right\|_{\mathcal{H}^k(\Omega)}\right).
\label{k56}
\end{align}

Substituting \eqref{k1}, \eqref{k2}, \eqref{k3},  \eqref{k4} and \eqref{k56} into \eqref{3.3}, it follows from Young's inequality that for $|\alpha|\leq k, k\geq 4$, 
\begin{align}
&\frac{1}{2}\frac{d}{dt}\sum\limits_{|\alpha| \leq k}\left\langle \partial^{\alpha}\vn, \sn \partial^{\alpha}\vn\right\rangle
+c_{\delta}\left\| \partial_{y}\vn\right\|_{\mathcal{H}^{k}(\Omega)}^{2}
\nonumber\\
&\leq
C\left( \left\|\vn\right\|_{\mathcal{H}^k(\Omega)}+ \left\| \partial_{y}\vn\right\|_{\mathcal{H}^k(\Omega)}\right)
\left( 1+\left\|\vn\right\|_{\mathcal{H}^k(\Omega)}\right) 
\left( 1+\left\| \vnn\right\|_{\mathcal{H}^k(\Omega)}^5
\right)
\nonumber\\
&
\leq
\frac{c_{\delta}}{2}\left\| \partial_{y}\vn\right\|_{\mathcal{H}^{k}(\Omega)}^{2}+
C\left( 1+\left\|\vn\right\|_{\mathcal{H}^k(\Omega)}^2\right) 
\left( 1+\left\| \vnn\right\|_{\mathcal{H}^k(\Omega)}^{10}
\right).
\label{3.17}
\end{align}
Since the matrix ${\bf{S}}$ is positive  definite,  it follows, by $\min\{\mathcal{A}^{-1}(P-q_{1})q_{1}, \frac{ Q}{ 2\gamma}\} \geq c'_{\delta,\gamma}$ due to positive lower bound 
assumption $\eqref{3.2}_{4}$, that
\begin{align*}
\left\langle \partial^{\alpha}\vn, \sn \partial^{\alpha}\vn\right\rangle
\geq c'_{\delta,\gamma} \left\| \partial^{\alpha}\vn\right\|_{L^2(\Omega)},
\end{align*}
which concludes that
\begin{align*}
\left\|\vn\right\|_{\mathcal{H}^k(\Omega)}^2 \leq C \sum\limits_{|\alpha| \leq k}\left\langle \partial^{\alpha}\vn, \sn \partial^{\alpha}\vn\right\rangle.
\end{align*}
Putting \eqref{3.17} together, we obtain
\begin{align*}
	&\frac{d}{dt}\sum\limits_{|\alpha| \leq k}\left\langle \partial^{\alpha}\vn, \sn \partial^{\alpha}\vn\right\rangle
	+\left\| \partial_{y}\vn\right\|_{\mathcal{H}^{k}(\Omega)}^{2}
	\\
	&
	\leq
	C\left( 1+\sum\limits_{|\alpha| \leq k}\left\langle \partial^{\alpha}\vn, \sn \partial^{\alpha}\vn\right\rangle\right) 
	\left( 1+\left\| \vnn\right\|_{\mathcal{H}^k(\Omega)}^{10}
	\right).
\end{align*}
Summing up, we derive from Gronwall's inequality 
  that
\begin{align}
\begin{split}
	&\sum\limits_{|\alpha| \leq k}\left\langle \partial^{\alpha}\vn, \sn \partial^{\alpha}\vn\right\rangle(t)
+\int_{0}^{t}\left\| \partial_{y}\vn\right\|_{\mathcal{H}^{k}(\Omega)}^{2} d\tau
\\
&
\leq
C\left( 1+\sum\limits_{|\alpha| \leq k}\left\langle \partial^{\alpha}\vn, \sn \partial^{\alpha}\vn\right\rangle\big|_{t=0}\right) \cdot
\exp \left( C\int_{0}^{t}\left\| \vnn\right\|_{\mathcal{H}^k(\Omega)}^{10} ~d\tau
\right),
\end{split}
\end{align}
from which, initial condition \eqref{initial} for approximate $n$-th order solution and  \eqref{m13}, 
we infer that
\begin{align*}
	&\left\|\vn\right\|_{\mathcal{H}^k(\Omega_{t})}^2
+\int_{0}^{t}\left\| \partial_{y}\vn\right\|_{\mathcal{H}^{k}(\Omega)}^{2} d\tau
\leq
C_{0}(T_0, M_e, \delta, \gamma)M_{1}^{3}\exp \left( C_{0}(T_0, M_e, \delta, \gamma)\int_{0}^{t}\left\| \vnn\right\|_{\mathcal{H}^k(\Omega)}^{10} ~d\tau
\right).
\end{align*}
Hence, the proof of Lemma \ref{solvability} is completed.

\end{proof}

Now, we  need to verify that the approximate solution $\vn$  to initial-boundary value problem \eqref{3.00} is uniform bounded and satisfies the corresponding 
$n$-th order induction hypothesis $\eqref{3.2}$. To be specific
we have the following lemma.

\begin{lemma}\label{bounded}
Under the assumptions of Theorem \ref{local}, 
the approximate solution
sequence $\{\vn\}_{n \geq 0}=\{(u^{n}, w^{n}, q^{n})\}_{n \geq 0}$
 constructed in the  subsection \ref{s3.1} satisfies that
 \begin{align}
\left\| \vn\right\|_{\mathcal{H}^{k}(\Omega_{T_{1}})} \leq M, \qquad \forall n \geq 0,
\label{4.1}
 \end{align}
 and
  \begin{align}
 \delta \leq q^{n}(t,x,y)+ H^{2}(t,x)/2\leq P(t,x)-\delta, \qquad (t, x, y) \in \Omega_{T_{1}},
 	\label{4.2}
 \end{align}
 for any $ T_1\in(0,T_0]$.
\end{lemma}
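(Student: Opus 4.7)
\textbf{Proof proposal for Lemma \ref{bounded}.} The plan is to argue by induction on $n$, with $M$ and $T_{1}$ chosen once and for all (independently of $n$) so as to close the induction. For the base case $n=0$, the construction in Subsection~\ref{s3.1} yields ${\bf v}^{0}\in \mathcal{H}^{k}(\Omega_{T})$, so we simply take $M$ larger than $\|{\bf v}^{0}\|_{\mathcal{H}^{k}(\Omega_{T})}$, and the pointwise estimate \eqref{4.2} for $n=0$ is exactly \eqref{0intial}.

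For the inductive step, suppose $\|{\bf v}^{n-1}\|_{\mathcal{H}^{k}(\Omega_{T_{1}})}\leq M$ and that $\delta\leq q^{n-1}+H^{2}/2\leq P-\delta$ holds on $\Omega_{T_{1}}$. The lower-bound hypothesis is exactly what is needed to ensure the matrix ${\bf B}({\bf v}^{n-1}_{1})$ in \eqref{B} is uniformly positive definite, so that Lemma~\ref{solvability} applies and gives
\begin{equation*}
\|{\bf v}^{n}\|_{\mathcal{H}^{k}(\Omega_{t})}^{2}
+\int_{0}^{t}\|\partial_{y}{\bf v}^{n}\|_{\mathcal{H}^{k}(\Omega)}^{2}\,d\tau
\leq C_{0}M_{1}^{3}\exp\bigl(C_{0}\,t\,M^{10}\bigr),\qquad 0\leq t\leq T_{1}.
\end{equation*}
Now I would fix $M$ so that $M^{2}\geq 2C_{0}M_{1}^{3}$, and then choose $T_{1}$ so small that $C_{0}T_{1}M^{10}\leq \ln 2$; this immediately yields $\|{\bf v}^{n}\|_{\mathcal{H}^{k}(\Omega_{T_{1}})}\leq M$, propagating \eqref{4.1} to step $n$.

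To recover the pointwise bound \eqref{4.2} at step $n$, write $q_{1}^{n}:=q^{n}+H^{2}/2$ and note that by construction $q_{1}^{n}|_{t=0}=h_{1,0}^{2}/2$, so the initial data assumption \eqref{assumptions0} gives the strict bound $2\delta\leq q_{1}^{n}|_{t=0}\leq P(0,x)-2\delta$ with room $\delta$ on each side. Writing
\begin{equation*}
q_{1}^{n}(t,x,y)-q_{1}^{n}(0,x,y)=\int_{0}^{t}\partial_{\tau}q_{1}^{n}(\tau,x,y)\,d\tau,
\end{equation*}
and applying the Sobolev-type inequality of Lemma~\ref{sobolevtype} together with $k\geq 4$ and the just-established bound $\|{\bf v}^{n}\|_{\mathcal{H}^{k}(\Omega_{T_{1}})}\leq M$, I obtain $\|\partial_{t}q_{1}^{n}\|_{L^{\infty}(\Omega_{T_{1}})}\leq C(M+M_{e})$. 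A similar estimate on $|P(t,x)-P(0,x)|$ uses \eqref{me}. Shrinking $T_{1}$ once more so that $C(M+M_{e})T_{1}\leq \delta$ then gives $\delta\leq q_{1}^{n}\leq P(t,x)-\delta$ on $\Omega_{T_{1}}$, which is \eqref{4.2} at step $n$. The remaining induction hypotheses in \eqref{3.2} (the compatibility of $\partial_{t}^{j}{\bf v}^{n}|_{t=0}$ with ${\bf v}_{0}^{j}$, the boundary values $(u^{n},w^{n},\partial_{y}q^{n})|_{y=0}=\mathbf{0}$, and the decay at $y=+\infty$) are built into the linear problem \eqref{3.00} itself and are inherited by any solution provided by Lemma~\ref{solvability}.

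The main obstacle I anticipate is the \emph{order of quantifiers}: $M$ must be chosen before $T_{1}$, and both must be independent of $n$, while the exponential in the a priori estimate is sensitive to $M^{10}$. The choice $M^{2}\geq 2C_{0}M_{1}^{3}$ followed by $C_{0}T_{1}M^{10}\leq \ln 2$ handles this cleanly because the factor $C_{0}M_{1}^{3}$ in front of the exponential does not depend on $M$. A secondary point requiring care is that the $L^{\infty}$ propagation of \eqref{4.2} must use only constants depending on $M$, $M_{e}$, and fixed data (not on $n$), which is why Sobolev embedding in Lemma~\ref{sobolevtype} applied at each fixed time is the right tool rather than any integrated quantity.
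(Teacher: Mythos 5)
Your proposal is correct and uses the same essential ingredients as the paper — the a priori estimate from Lemma~\ref{solvability}, induction in $n$, and the Sobolev-type inequality of Lemma~\ref{sobolevtype} to propagate the pointwise bound in time — but you close the bootstrap differently. You run the induction with a \emph{time-uniform} hypothesis $\|{\bf v}^{n-1}\|_{\mathcal{H}^{k}(\Omega_{T_{1}})}\leq M$, pick $M$ first (subject to $M^{2}\geq 2C_{0}M_{1}^{3}$ and $M\geq\|{\bf v}^{0}\|_{\mathcal{H}^{k}(\Omega_{T_{0}})}$), then shrink $T_{1}$ so that $C_{0}T_{1}M^{10}\leq\ln 2$. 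The paper instead uses a \emph{time-dependent, self-consistent} inductive bound of the form $\|{\bf v}^{n-1}\|_{\mathcal{H}^{k}(\Omega_{t})}^{2}\leq C_{0}M_{1}^{3}(1-5C_{0}^{6}M_{1}^{15}t)^{-1/5}$, which reproduces itself exactly under the Gronwall exponential, and only afterwards defines $M$ by evaluating that expression at $t=T_{1}$ (cf.~\eqref{M}). Your route is simpler and more standard; the paper's gives an explicit closed-form dependence of $M$ on $T_{1}$, $C_{0}$, $M_{1}$ without having to solve a fixed-point inequality for $M$. Both are valid, and the order-of-quantifiers concern you raise (fix $M$, then $T_{1}$, both independent of $n$, with $C_{0}$ already determined by the fixed data $T_{0},M_{e},\delta,\gamma$ from Lemma~\ref{solvability}) is handled correctly in your argument. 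Your treatment of the $L^{\infty}$ propagation of \eqref{4.2}, using $\|\partial_{t}q_{1}^{n}\|_{L^{\infty}}\lesssim M+M_{e}$ from Lemma~\ref{sobolevtype} (which only needs $k\geq 3$) and then shrinking $T_{1}$ once more, matches the paper's mechanism. One small point worth making explicit: the inductive step for Lemma~\ref{solvability} requires not just the norm bound on ${\bf v}^{n-1}$ but also the pointwise bound $\eqref{3.2}_{4}$ at level $n-1$ (it is what makes ${\bf B}({\bf v}^{n-1}_{1})$ uniformly elliptic and ${\bf S}({\bf v}^{n-1}_{1})$ uniformly positive), so the induction hypothesis must carry both \eqref{4.1} and \eqref{4.2} simultaneously — you do include both, but it is the combination that makes the step go through, not just the energy bound.
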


\begin{proof}

We prove Lemma \ref{bounded} by induction on $n$. Firstly, 
fix $T'=\min\left\lbrace T_{0}, \frac{1}{5C_{0}^{6}M_{1}^{15}}\right\rbrace $
and assume that  for $n \geq 1$,
\begin{align}
\left\|\vnn\right\|_{\mathcal{H}^k(\Omega_{t})}^2 \leq 
\frac{C_{0}M_{1}^3}{\left(1-5C_{0}^{6}M_{1}^{15}t\right)^{\frac15} },
\qquad \forall t \in [0, T').
\label{4.3}
\end{align}
It is straightforward to verify that the zero-th order approximate solution ${\bf{v}}^{0}$ satisfies \eqref{4.3} provided the constant $C_0$ is chosen sufficiently large.
Therefore, by the induction  of all  $n\geq 0$ and \eqref{g}, it holds that 
\begin{align}
\left\|\vn\right\|_{\mathcal{H}^k(\Omega_{t})}^2 \leq C_{0}M_{1}^{3}\exp \left( \int_{0}^{t}\frac{C_{0}^{6}M_{1}^{15}}{1-5C_{0}^{6}M_{1}^{15}\tau } ~d\tau
\right) \leq \frac{C_{0}M_{1}^3}{\left(1-5C_{0}^{6}M_{1}^{15}t\right)^{\frac15} },
\label{4.4}
\end{align}
for $t\in[0, T')$.
Moreover, due to \eqref{assumptions0}) , we have
\begin{align*}
q^{n}(0,x,y) + H^2(0,x)/2 = h_{1,0}^{2}(x,y)/2 \geq 2\delta,
\end{align*}
from which, \eqref{me} and  the Sobolev embedding inequality,  we achieve
\begin{align*}
&q^{n}(t,x,y) + H^{2}(t,x) /2= q^{n}(0,x,y) + H^2(0,x)/2+\int_0^t \left(\partial_t q^{n}(\tau,x,y) + \partial_t \left( H^{2}(\tau,x)/2\right) \right) d\tau
\\
&\geq 2\delta - \int_0^t \left(\|\partial_t q^{n}(\tau,\cdot)\|_{L^\infty(\Omega)} + \|\partial_t (H^2(\tau,\cdot)/2)\|_{L^\infty(\mathbb{T}_{x})}\right) d\tau
\\
&
\geq 2\delta - t\left(\|q^{n}(\tau)\|_{\mathcal{H}^3(\Omega_{t})} + \| H(\tau,\cdot)\|_{H^2(\mathbb{T}_x)}^2\right) \\
&\geq 2\delta - 2t\sqrt{C_{0}M_{1}^3}{\left(1-3C_{0}^{6}M_{1}^{15}t\right)^{-\frac{1}{10}} }.
\end{align*}
Similar estimate holds for  $P(t,x)-q^{n}(t,x,y)- H^{2}(t,x)/2$.
Let $T_{1}$  be chosen such that 
\begin{align*}
2T_{1}\sqrt{C_{0}M_{1}^3}{\left(1-3C_{0}^{6}M_{1}^{15}T_{1}\right)^{-\frac{1}{10}} } \leq \delta, 
\qquad \forall ~ T_{1} \in (0, T').
\end{align*}
Consequently,  by \eqref{4.4}, we have 
\begin{align}
M=\frac{\sqrt{C_{0}M_{1}^3}}{\left(1-5C_{0}^{6}M_{1}^{15}T_{1}\right)^{\frac{1}{10}} }.
\label{M}
\end{align}
This completes the proof of Lemma \ref{bounded}.

\end{proof}

\subsection{Compactness property}
In this subsection, we will the compactness
of sequence  $\{\vn\}_{n \geq 0}=\{(u^{n}, w^{n}, q^{n})\}_{n \geq 0}$, which is contractive in $L^2$.
\begin{lemma}\label{contraction}
	
	Let $T_{*}=\min\left\lbrace T_{1}, T_{2}\right\rbrace $ with $T_{2}$  given by
$$T_{2}= \frac{1}{2C_{1}e^{C_{1}T_{1}}},$$
 then for any $t\in[0, T_*]$, it holds that
\begin{align}
	\sup_{0\leq \tau \leq t}\left\|  {\bf{V}}^{n}(\tau)\right\| _{L^2(\Omega)}^{2}
	\leq\frac{1}{2}\sup_{0\leq \tau \leq t}\left\|  {\bf{V}}^{n-1}(\tau)\right\| _{L^2(\Omega)}^{2}, \quad n\geq1.
\label{4.6}
\end{align}
\end{lemma}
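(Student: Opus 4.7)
Set ${\bf V}^{n}={\bf v}^{n}-{\bf v}^{n-1}$ (so that ${\bf V}^{n}|_{t=0}=0$, and $(V^{n}_{1},V^{n}_{2},\partial_{y}V^{n}_{3})|_{y=0}=\mathbf{0}$, $\lim_{y\to\infty}{\bf V}^{n}=\mathbf{0}$). Subtract equation \eqref{3.00} at level $n-1$ from the one at level $n$ to get
\begin{align*}
{\bf S}({\bf v}^{n-1}_{1})\partial_{t}{\bf V}^{n}+{\bf A}({\bf v}^{n-1}_{1})\partial_{x}{\bf V}^{n}-{\bf B}({\bf v}^{n-1}_{1})\partial_{y}^{2}{\bf V}^{n}+{\bf F}({\bf v}^{n-1}_{1},\partial_{y}{\bf v}^{n-1}_{1})\partial_{y}{\bf V}^{n}=\mathcal{R}^{n-1},
\end{align*}
where $\mathcal{R}^{n-1}$ collects the Lipschitz differences $({\bf S}({\bf v}^{n-2}_{1})-{\bf S}({\bf v}^{n-1}_{1}))\partial_{t}{\bf v}^{n-1}$, $({\bf A}({\bf v}^{n-2}_{1})-{\bf A}({\bf v}^{n-1}_{1}))\partial_{x}{\bf v}^{n-1}$, $({\bf B}({\bf v}^{n-1}_{1})-{\bf B}({\bf v}^{n-2}_{1}))\partial_{y}^{2}{\bf v}^{n-1}$, $g({\bf v}^{n-1}_{1})-g({\bf v}^{n-2}_{1})$, and the analogous difference for ${\bf F}\partial_{y}{\bf v}^{n-1}_{1}$.

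The plan is to take the $L^{2}$ inner product with ${\bf V}^{n}$ and exploit the structure already used in Lemma \ref{solvability}: ${\bf S}$ is symmetric positive definite, ${\bf A}$ is symmetric, and ${\bf B}$ is positive definite with lower bound $c_{\delta}$. Integration by parts in $y$ in the ${\bf B}\partial_{y}^{2}{\bf V}^{n}$ term produces no boundary contribution—the diagonal structure of ${\bf B}$ together with $V^{n}_{1}|_{y=0}=V^{n}_{2}|_{y=0}=0$ and $\partial_{y}V^{n}_{3}|_{y=0}=0$ kills the trace—while the symmetry of ${\bf A}$ produces a harmless zeroth-order remainder after one integration by parts. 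The term $\langle{\bf V}^{n},{\bf F}\partial_{y}{\bf V}^{n}\rangle$ is absorbed by $\frac{c_{\delta}}{4}\|\partial_{y}{\bf V}^{n}\|^{2}_{L^{2}}+C\|{\bf V}^{n}\|^{2}_{L^{2}}$ via Young's inequality. All the Lipschitz differences inside $\mathcal{R}^{n-1}$ are pointwise bounded by $C|{\bf V}^{n-1}|$ times $L^{\infty}$ norms of $\partial_{t}{\bf v}^{n-1}$, $\partial_{x}{\bf v}^{n-1}$, $\partial_{y}^{2}{\bf v}^{n-1}$, which are uniformly bounded by Lemma \ref{bounded} together with Lemma \ref{sobolevtype} (here $k\geq 4$ is used).

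The delicate piece is the difference ${\bf F}({\bf v}^{n-1}_{1},\partial_{y}{\bf v}^{n-1}_{1})-{\bf F}({\bf v}^{n-2}_{1},\partial_{y}{\bf v}^{n-2}_{1})$: because ${\bf F}$ depends linearly on $\partial_{y}q$, the source naïvely produces a factor $\partial_{y}V^{n-1}_{3}$ for which the induction hypothesis yields no $L^{2}$ control. I would handle this by integrating by parts in $y$: since $\partial_{y}q^{n-1}_{1}|_{y=0}=\partial_{y}q^{n-2}_{1}|_{y=0}=0$, no boundary term arises, and the $\partial_{y}$ is transferred onto $V^{n}_{3}$ and the smooth coefficients, so a typical term becomes
\begin{align*}
\int V^{n}_{3}\,\chi\,\partial_{y}V^{n-1}_{3}\,dy=-\int\bigl(\partial_{y}V^{n}_{3}\cdot\chi+V^{n}_{3}\cdot\partial_{y}\chi\bigr)V^{n-1}_{3}\,dy,
\end{align*}
where $\chi$ is a uniformly bounded (together with $\partial_{y}\chi$) combination of averaged coefficients and $\partial_{y}q^{n-1}_{1}+\partial_{y}q^{n-2}_{1}$. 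Cauchy--Schwarz and Young then give a bound by $\tfrac{c_{\delta}}{4}\|\partial_{y}{\bf V}^{n}\|^{2}_{L^{2}}+C(\|{\bf V}^{n}\|^{2}_{L^{2}}+\|{\bf V}^{n-1}\|^{2}_{L^{2}})$.

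Collecting everything and absorbing the dissipation terms yields
\begin{align*}
\frac{d}{dt}\|{\bf V}^{n}\|^{2}_{L^{2}(\Omega)}+\frac{c_{\delta}}{2}\|\partial_{y}{\bf V}^{n}\|^{2}_{L^{2}(\Omega)}\leq C_{1}\|{\bf V}^{n}\|^{2}_{L^{2}(\Omega)}+C_{1}\|{\bf V}^{n-1}\|^{2}_{L^{2}(\Omega)},
\end{align*}
with $C_{1}=C_{1}(M,M_{e},\delta,\gamma)$. Since ${\bf V}^{n}(0)=0$, Gronwall yields $\|{\bf V}^{n}(t)\|^{2}_{L^{2}}\leq C_{1}t\,e^{C_{1}t}\sup_{[0,t]}\|{\bf V}^{n-1}\|^{2}_{L^{2}}$ on $[0,T_{1}]$. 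Taking the supremum and using $C_{1}T_{2}e^{C_{1}T_{2}}\leq C_{1}T_{2}e^{C_{1}T_{1}}=\tfrac{1}{2}$ for the stated $T_{2}$ gives exactly \eqref{4.6}. The main obstacle, as indicated, is the $\partial_{y}V^{n-1}_{3}$ contribution from the difference of $f$; the structural boundary condition $\partial_{y}q|_{y=0}=0$ is indispensable for moving that derivative off ${\bf V}^{n-1}$ without generating a surface term.
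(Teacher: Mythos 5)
Your proof follows essentially the same route as the paper: form the $L^2$ energy identity for ${\bf V}^n$ using the symmetry of ${\bf S}$, ${\bf A}$ and the positivity of ${\bf B}$, bound the Lipschitz differences via the uniform $\mathcal{H}^4$ bound from Lemma \ref{bounded}, and close with Gronwall and the choice of $T_2$. You have gone further than the paper's ``similar to the analysis of \eqref{3.17}'' by explicitly isolating the one term that is not pointwise Lipschitz in ${\bf V}^{n-1}$, namely the $\partial_y V^{n-1}_3$ contribution coming from the $\partial_y q$-dependence of ${\bf F}$, and you handle it correctly by integrating by parts in $y$ and absorbing $\|\partial_y {\bf V}^n\|_{L^2}^2$ into the dissipation. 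One small imprecision in your boundary-vanishing argument: you attribute the disappearance of the surface term entirely to $\partial_y q^{n-1}_1|_{y=0}=\partial_y q^{n-2}_1|_{y=0}=0$, but for the first two rows of ${\bf F}$ (where the test functions are $V^n_1$, $V^n_2$) the boundary term is killed by $V^n_1|_{y=0}=V^n_2|_{y=0}=0$, and the Neumann condition $\partial_y q^{n-1}|_{y=0}=0$ is what one needs only in the third row, where the factor $\partial_y q^{n-1}$ multiplying $V^n_3 V^{n-1}_3$ on the boundary vanishes. The conclusion is unaffected, and the final Gronwall step and choice of $T_2$ match the paper exactly.
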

\begin{proof}
Setting
\begin{align*}
{\bf{V}}^{n}={\bf{v}}^{n+1}-{\bf{v}}^{n}=(u^{n+1}, w^{n+1}, q^{n+1})
-(u^{n}, w^{n}, q^{n}), \qquad n\geq 1,
\end{align*}
then based on \eqref{3.00}, ${\bf{V}}^{n}$ satisfies
\begin{equation}\label{4.5}
	\left\{
	\begin{aligned}
		&{\bf{S}}({\bf{v}}^{n}_{1})\partial_{t}{\bf{V}}^{n}
		+{\bf{A}}(\vn_{1})\partial_{x}{\bf{V}}^{n}
		+{{\bf{F}}({\bf{v}}_{1}^{n}, \partial_{y}{\bf{v}}_{1}^{n})} \partial_{y}{\bf{V}}^{n}
		-{\bf{B}}(\vn_{1})\partial_{y}^{2}{\bf{V}}^{n} \\
		&\quad=\left(g(\vn_{1} -g(\vnn_{1})\right) 
		-\left({\bf{S}}(\vn_{1})-{\bf{S}}(\vnn_{1}) \right)
		\partial_{t}\vn
		-\left({\bf{A}}(\vn_{1})-{\bf{A}}(\vnn_{1}) \right) \partial_{x}\vn
		\\
		&\qquad
		-
		\left( {{\bf{F}}({\bf{v}}_{1}^{n}, \partial_{y}{\bf{v}}_{1}^{n})}-
		{{\bf{F}}({\bf{v}}_{1}^{n-1}, \partial_{y}{\bf{v}}_{1}^{n-1})}\right) \partial_{y}\vn
		+\left({\bf{B}}(\vn_{1})-{\bf{B}}(\vnn_{1}) \right) \partial_{y}^{2}\vn
		,\\
		&(u^{n+1}-u^{n}, w^{n+1}-w^{n}, \partial_{y}(q^{n+1}-q^{n})\big|_{y=0}=\mathbf{0},
		\quad 
		\lim_{ {y}\rightarrow +\infty}{\bf{V}}^{n}( {t}, {x}, {y})=\mathbf{0},\quad {\bf{V}}^{n}|_{ {t}=0}=\mathbf{0}.
	\end{aligned}
	\right.
\end{equation}
Similar to the analysis of a priori estimate \eqref{3.17} in Lemma \ref{solvability}, we can arrive at 
\begin{align*}
	&\frac{1}{2}\frac{d}{dt}\sum\limits_{|\alpha| \leq k}\left\langle {\bf{V}}^{n}, {\bf{S}}({\bf{v}}^{n}_{1}) {\bf{V}}^{n}\right\rangle
	+\left\| \partial_{y}{\bf{V}}^{n}\right\|_{{L}^{2}(\Omega)}^{2}
	\nonumber\\
	&\leq
	C\mathcal{P}\left( \left\|\vn\right\|_{\mathcal{H}^4(\Omega)}, \left\| \vnn\right\|_{\mathcal{H}^4(\Omega)}\right)
	\left( \left\|{\bf{V}}^{n-1}\right\|_{L^2(\Omega)}^{2}+ \left\|{\bf{V}}^{n}\right\|_{L^2(\Omega)}^{2}\right) 
,
\end{align*}
where $\mathcal{P}(\cdot, \cdot)$ is a generic polynomial functional.
Using the uniform estimate \eqref{4.1} and positive lower bound condition $\eqref{3.2}_{4}$, we infer that there exist  a constant $C_{1}$ only depending on $T_1, M, \delta, \gamma$ such that the following estimate holds :
\begin{align*}
	\frac{d}{dt}\left\| {\bf{V}}^{n}\right\|_{{L}^{2}(\Omega)}^{2}
	+\left\| \partial_{y}{\bf{V}}^{n}\right\|_{{L}^{2}(\Omega)}^{2}
	\leq
	C_{1}
	\left( \left\|{\bf{V}}^{n-1}\right\|_{L^2(\Omega)}^{2}+ \left\|{\bf{V}}^{n}\right\|_{L^2(\Omega)}^{2}\right) 
	,
\end{align*}
which, together with Gronwall's lemma, yields directly
\begin{align*}
\sup_{0\leq \tau \leq t}\left\|  {\bf{V}}^{n}(\tau)\right\| _{L^2(\Omega)}^{2} \leq C_{1}e^{C_{1}t}\int_{0}^{t} \left\|  {\bf{V}}^{n-1}(\tau)\right\| _{L^2(\Omega)}^{2} d\tau \leq
C_{1}te^{C_{1}t} \sup_{0\leq \tau \leq t}\left\|  {\bf{V}}^{n-1}(\tau)\right\| _{L^2(\Omega)}^{2}, \forall t \in [0, T_{1}].
\end{align*}
Choosing $T_{2}= \frac{1}{2C_{1}e^{C_{1}T_{1}}}$, then we obtain
\begin{align*}
	\sup_{0\leq \tau \leq t}\left\|  {\bf{V}}^{n}(\tau)\right\| _{L^2(\Omega)}^{2}
	\leq\frac{1}{2}\sup_{0\leq \tau \leq t}\left\|  {\bf{V}}^{n-1}(\tau)\right\| _{L^2(\Omega)}^{2}, \forall t \in [0, T_{*}],
\end{align*}
where $T_{*}=\min\left\lbrace T_{1}, T_{2}\right\rbrace $.
Therefore, we complete the proof of this lemma.
\end{proof}

\subsection{Proof of Theorem \ref{local}}
In this subsection, we will finish the proof of Theorem \ref{local}. Indeed,
by virtue of Lemmas \ref{bounded}-\ref{contraction}, it can be shown that the approximation sequence $\{\vn\}_{n\geq 0}$ forms a Cauchy sequence in  $\mathcal{H}^{k'} (\Omega_{T_{*}})$ with $k' < k$.  Consequently, the sequence $\{\vn\}_{n\geq 0}$ converges strongly in $\mathcal{H}^{k'} (\Omega_{T_{*}})$.  Therefore, there exists ${\bf{v}}=(u, w, q)^{\it T} \in \mathcal{H}^{k'} (\Omega_{T_{*}})$ such that 
\begin{align*}
\lim_{n\rightarrow +\infty}\vn={\bf{v}} \quad \mathrm{in} \quad \mathcal{H}^{k'} (\Omega_{T_{*}}),
\end{align*}
with the norm bound
\begin{align*}
	\left\| {\bf{v}}\right\|_{\mathcal{H}^{k}(\Omega_{T_{1}})} \leq M, \qquad \forall n \geq 0,
\end{align*}
where the constant $M$ is given in \eqref{M}.
By passing to the limit $n \rightarrow \infty $ in \eqref{3.00} and utilizing the decompositions
\begin{align*}
	u_{1}=u+U(t,x)\phi(y), \qquad w_{1}=w+I(t,x), \qquad  q_{1}=q+H^{2}(t,x)/2,
\end{align*}
we can conclude that ${\bf{v}}_{1}=(u_{1}, w_{1}, q_{1})^{\it T}$ is a classical solution to the problem \eqref{3m}. The proof of Theorem \ref{local} is completed.

\section{Existence for the original problem}\label{s4}
In this section, we prove that the solution  $(\hat{u}_1, \hat{w}_{1},\hat{h}_1)$ of the system \eqref{hat}  can be transformed to a classical solution $({u}_1, {w}_{1},{h}_1)$ of the original  problem
\eqref{1 mm boundary layer}.

Firstly, it follows from
Theorem \ref{local} that there exist a time $T_* \in (0,T]$ such that initial-boundary value problem \eqref{hat} has a unique classical solution
$(\hat{u}_1,\hat{w}_{1},\hat{h}_1)(t,x,\bar{y})$  satisfying
\begin{align}
\hat{h}_{1}(t, x, \bar{y})\geq \delta, \qquad	\frac{1}{2}\hat{h}_{1}^2(t,x,\bar{y}) \leq P(t,x)-\delta, \qquad\forall (t,x,\bar{y})\in D_{T_*},
\label{asm}
\end{align}
where we have replaced the variable $(\bar{t}, \bar{x})$ by $(t,x)$.

Next, define $\psi=\psi(t,x,y)$ by the following form
\begin{align}
	y=\int^{\psi(t,x,y)}_0\frac{d\bar{y}}{\hat{h}_1(t,x,\bar{y})}.
\label{4.7}
\end{align}
By \eqref{asm}, we deduce that $\psi$ is well-defined and continuous in $D_{T_*}$. It holds
\begin{align}
	\partial_y \psi(t,x,y)=\hat{h}_1\left( t,x,\psi(t,x,y)\right),
\label{4.8}
\end{align}
which,  together with the upper and lower bounds of $\hat{h}_1$ given in \eqref{asm}, yields that 
\begin{align}
	\psi|_{y=0}=0,\quad \psi|_{y\rightarrow+\infty}\rightarrow+\infty,
\label{4.9}
\end{align}
and
\begin{align}
	\psi(0,x,y)=\bar{y}(x,y).
\label{4.10}
\end{align}
In addition , simple and direct calculations  by using \eqref{4.7} show
\begin{equation}
	\left\{\begin{aligned}
		&\partial_t \psi(t,x,y)
		=\hat{h}_1\left( t,x,\psi(t,x,y)\right) \int^{\psi(t,x,y)}_0\frac{\partial_t\hat{h}_1}{\hat{h}_{1}^2}(t,x,\bar{y})d\bar{y},
		\\
		&\partial_x \psi(t,x,y)
		=\hat{h}_1\left( t,x,\psi(t,x,y)\right) \int^{\psi(t,x,y)}_0\frac{\partial_x\hat{h}_1}{\hat{h}_{1}^2}(t,x,\bar{y})d\bar{y},
		\\
		&\partial_{y}^2\psi(t,x,y)=\hat{h}_1\left( t,x,\psi(t,x,y)\right) \partial_{\bar{y}}\hat{h}_1\left( t,x,\psi(t,x,y)\right),
		\\
		&\partial_{y}^3\psi(t,x,y)=\hat{h}_1\left( t,x,\psi(t,x,y)\right) \partial_{\bar{y}}\left( (\hat{h}_1\partial_{\bar{y}}\hat{h}_1) (t,x,\psi(t,x,y))\right).
	\end{aligned}\label{4.11}
	\right.
\end{equation}
By means of $\psi$, we define 
\begin{align}
	(u_{1},w_1,h_{1})=(u_{1},w_{1} ,h_{1}) (t, x, y):= \hat{u}_{1},\hat{w}_{1}, \hat{h}_{1})\left(t,x,\psi(t,x,y)\right) 
\label{4.12}
\end{align}
 and then, set
\begin{align}
	u_2(t,x,y)=-\frac{(\partial_t\psi+u_1\partial_x\psi-\sigma\partial_{y}^2\psi)}{h_1}(t,x,y),\qquad h_2(t,x,y)=-\partial_x\psi(t,x,y).
\end{align}
Thus, $(u_1, u_2, w_1, h_1, h_2)(t,x,y)$ is  a
classical solution of initial-boundary value problem \eqref{1 mm boundary layer}, which   is the desired solution of Theorem \ref{t1}.
The   detailed proof of assertion  is similar to section 3 in \cite{HLY2019}, we omit it for brevity.

Finally, it is easy to check by using \eqref{4.11} that,  $(u_{1}, w_{1}, h_{1}), \partial_y(u_{1}, w_{1}, h_{1})$
and $\partial_y^2(u_{1}, w_{1}, h_{1})$ are continuous and bounded in $D_{T_*}$;
$\partial_t (u_{1}, w_{1}, h_{1})$, $\partial_x(u_{1}, h_{1})$, $(u_{2}, h_{2})$ and $\partial_y(u_{2}, h_{2})$ are continuous and bounded in any compact set of $D_{T_*}$.  The proof of Theorem \ref{t1} is completed.

\begin{appendices}

\end{appendices}

\section*{Statement about conflicting interests}
The authors declare that there are no conflicts of interest.

\section*{Acknowledgements}
Yuming Qin was supported  by the NNSF of China with contract number 12171082, the fundamental
research funds for the central universities with contract numbers 2232022G-13, 2232023G-13, 2232024G-13.

	\phantomsection
\addcontentsline{toc}{section}{\refname}

\end{document}